\newcommand{\Comp}{\mathbf{Comp}}
\newcommand{\Tych}{\mathbf{Tych}}
\newcommand{\w}{\omega}
\newcommand{\pr}{\mathrm{pr}}
\newcommand{\supp}{\mathrm{supp}}
\renewcommand{\int}{\operatorname{Int}}
\newcommand{\cl}{\operatorname{cl}}
\newcommand{\id}{\mathrm{id}}
\newcommand{\cDelta}{{\Delta\kern-7pt{\mbox{\tiny $\Delta$}}}}
\newcommand{\U}{\mathcal U}
\newcommand{\IN}{\mathbb N}
\newcommand{\mathscr}{\mathit}
\newcommand{\invlim}{\lim}
\newcommand{\V}{\mathcal V}
\newtheorem{theorem}{Theorem}[section]
\newtheorem{lemma}[theorem]{Lemma}
\newtheorem{claim}[theorem]{Claim}
\newtheorem{corollary}[theorem]{Corollary}
\newtheorem{example}[theorem]{Example}
\newtheorem{problem}[theorem]{Problem}
\newtheorem{proposition}[theorem]{Proposition}
\theoremstyle{definition}
\newtheorem{definition}[theorem]{Definition}
\newtheorem{remark}[theorem]{Remark}
\title[Functors preserving skeletal maps]{On functors preserving skeletal maps and skeletally generated compacta}
\author{Taras Banakh, Andrzej Kucharski, and Marta Martynenko}
\address{T.~Banakh: Faculty of Mechanics and Mathematics, Ivan Franko National University of Lviv (Ukraine) and
Instytut Matematyki, Jan Kochanowski University, Kielce (Poland)}
\email{t.o.banakh@gmail.com}
\address{A.Kucharski: Institute of Mathematics, University of Silesia, ul. Bankowa 14, 40-007 Katowice (Poland)}
\email{akuchar@math.us.edu.pl}
\address{M.~Martynenko: Faculty of Mechanics and Mathematics, Ivan Franko National University of Lviv (Ukraine)}
\email{martamartynenko@ukr.net}
\subjclass{18B30, 54B30, 54C10, 54B35, 54D30}
\keywords{Skeletal map, functor, skeletally generated compact space}
\begin{document}

\begin{abstract} A map $f:X\to Y$ between topological spaces is {\em skeletal} if  the preimage $f^{-1}(A)$ of each nowhere dense subset $A\subset Y$ is nowhere dense in $X$. We prove that a normal functor $F:\Comp\to\Comp$ is skeletal (which means that $F$ preserves skeletal epimorphisms) if and only if for any open surjective map $f:X\to Y$ between metrizable zero-dimensional compacta with two-element non-degeneracy set $N^f=\{x\in X:|f^{-1}(f(x))|>1\}$ the map $Ff:FX\to FY$ is skeletal. This characterization implies that each open normal functor is skeletal. The converse is not true even for normal functors of finite degree. The other main result of the paper says that each normal functor $F:\Comp\to\Comp$ preserves the class of skeletally generated compacta. This contrasts with the known \v S\v cepin's result saying that a normal functor is open if and only if it preserves the class of openly generated compacta.
\end{abstract}
\maketitle

\section{Introduction}

In this paper we address the problem of preservation of skeletal maps and skeletally generated compacta by normal functors in the category $\Comp$ of compact Hausdorff spaces and their continuous maps. In the sequel all spaces are Hausdorff and all maps are continuous. A {\em compactum} is a compact Hausdorff space.

A map $f:X\to Y$ between topological spaces is called {\em skeletal\/} if  for each nowhere dense subset $A\subset Y$ the preimage $f^{-1}(A)$ is nowhere dense in $X$, see \cite{mr}. It is easy to see that each open map is skeletal while the converse is not true.


To formulate our main results, we need to recall some definitions from the topological theory of functors, see \cite{TZ}. A functor $F:\Comp\to\Comp$ in the category $\Comp$ is defined to be
\begin{itemize}
\item {\em monomorphic} (resp. {\em epimorphic}) if for each injective (resp. surjective) map $f:X\to Y$ between compacta the map $Ff:FX\to FY$ is injective (resp. surjective);
\item ({\em finitely}) {\em open} if for any open surjection $f:X\to Y$ between (finite) compacta the map $Ff:FX\to FY$ is open;
\item ({\em finitely}) {\em skeletal} if for any open surjection $f:X\to Y$ between (finite) compacta the map $Ff:FX\to FY$ is skeletal;
\item {\em weight-preserving} if $w(FX)\le w(X)$ for each infinite compactum $X$.
\end{itemize}
Here $w(X)$ denotes the {\em weight} (i.e. the smallest cardinality of a base of the topology) of $X$.
If a functor $F:\Comp\to\Comp$ is monomorphic, then for each closed subspace $X$ of a compact space $Y$  the map $Fi:FX\to FY$ induced by the inclusion $i:X\to Y$ is a topological embedding, which allows us to identify the space $FX$ with the subspace $Fi(FX)$ of $FY$.

Next we recall and define several properties of functors related to the bicommutativity. Let $\mathcal D$ be a commutative square diagram
$$\xymatrix{
\tilde X\ar_{p_X}[d]\ar^{\tilde f}[r]&\tilde Y\ar^{p_Y}[d]\\
X\ar_{f}[r]&Y
}$$consisting of continuous maps between compact spaces.

The diagram $\mathcal D$ is called {\em bicommutative} if $\tilde f\big(p_X^{-1}(x)\big)=p_Y^{-1}\big(f(x)\big)$ for all $x\in X$, see \cite[\S3.IV]{Kur1}, \cite[\S2.1]{Sh}.

We say that a functor $F:\Comp\to\Comp$
\begin{itemize}
\item is ({\em finitely}) {\em bicommutative} if $F$ preserves the bicommutativity of square diagrams $\mathcal D$ consisting of surjective maps $f,\tilde f,p_X,p_Y$ and (finite) compacta  $X,Y,\tilde X,\tilde Y$;
\item {\em preserves} ({\em finite}) {\em preimages} if $F$ preserves the bicommutativity of square diagrams $\mathcal D$ with injective maps $f,\tilde f$ (and finite space $X$);
\item {\em preserves} ({\em finite}) {\em 1-preimages} if $F$ preserves the bicommutativity of square diagrams $\mathcal D$ with injective maps $f,\tilde f$, bijective map $p_X$ (and finite space $X$).
\end{itemize}

It is clear that each bicommutative functor is finitely bicommutative. The converse is true for normal functors with finite supports, see
Proposition 2.10.1 of \cite{TZ}.
It is easy to see that a monomorphic functor $F:\Comp\to\Comp$ preserves [finite] (1-)preimages  if and only if for any map $f:X\to Y$ between compact spaces and a [finite] closed subset $Z\subset Y$ (such that $f^{-1}(z)$ is a singleton for every $z\in Z$) we get $(Ff)^{-1}(FZ)=F\big(f^{-1}(Z)\big)$.
\smallskip

A functor $F:\Comp\to\Comp$ will be called {\em mec} if $F$ is monomorphic, epimorphic, and continuous.
A mec functor that preserves finite 1-preimages will be called a {\em 1-mec} functor. A 1-mec functor that preserves weight of infinite compacta will be called a {\em 1-mecw} functor. The class of 1-mecw functors includes all normal functors in the sense of \v S\v cepin \cite{Sh},
\cite[\S2.3]{TZ} (let us recall that a functor $F:\Comp\to\Comp$ is {\em weakly normal} if it is monomorphic, epimorphic, continuous and preserves intersections, the empty set, the singleton, and the weight of infinite compacta; $F$ is {\em normal} if it is weakly normal and preserves preimages).
\smallskip

Our primary aim is to characterize skeletal functors among 1-mec functors. For a topological space $Z$
consider the open map $\mathscr 2_Z:Z\oplus 2\to Z\oplus 1$ defined by
$$\mathscr 2_Z:z\mapsto\begin{cases} z&\mbox{if $z\in Z$,}\\
0&\mbox{if $z\in 2$}.\end{cases}
$$
Here for a natural number $n$ by $Z\oplus n$ we denote the topological sum of $Z$ and the discrete space $n=\{0,\dots,n-1\}$.

\begin{theorem}\label{s-map} A 1-mec (resp. 1-mecw) functor $F:\Comp\to\Comp$ is skeletal if and only if  for each zero-dimensional compact (metrizable) space $Z$ the map $F\mathscr 2_Z:F(Z\oplus 2)\to F(Z\oplus 1)$ is skeletal.
\end{theorem}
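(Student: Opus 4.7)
The ``only if'' direction is immediate since each $\mathscr 2_Z$ is itself an open surjection of compacta, so $F\mathscr 2_Z$ must be skeletal whenever $F$ preserves skeletality of open surjections between compacta.

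For the converse, I would assume that $F\mathscr 2_Z$ is skeletal for every zero-dimensional compact (metrizable) $Z$ and aim to show that $Ff$ is skeletal for an arbitrary open surjection $f\colon X\to Y$ of compacta. The overall plan is to represent $Ff$ as an inverse limit of finite compositions of maps of the form $F\mathscr 2_{Z_i}$, and then appeal to the standard facts that a composition of skeletal maps is skeletal and that a limit of a morphism of spectra whose components are skeletal surjections remains skeletal.

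\emph{Stage 1: reduction to the zero-dimensional (metric) case.} Using continuity of $F$ (and, in the 1-mecw version, weight-preservation, which justifies restricting the test class to metrizable $Z$), I would write $X$ and $Y$ as limits of spectra of compacta of smaller weight so that $f$ is the limit of open surjections $f_\alpha\colon X_\alpha\to Y_\alpha$. Each $f_\alpha$ would then be lifted into a bicommutative square whose top map $\tilde f_\alpha\colon \tilde X_\alpha\to \tilde Y_\alpha$ is an open surjection between zero-dimensional (metric) compacta and whose vertical maps are continuous surjections; such a lift exists because every compactum is the image of a zero-dimensional compactum of the same weight. Applying $F$ to the square and using the 1-preimage-preservation property of $F$ together with epimorphicity of the vertical maps would allow skeletality of $F\tilde f_\alpha$ to descend to skeletality of $Ff_\alpha$, and passing to the limit would retain it.

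\emph{Stage 2: reduction to the maps $\mathscr 2_Z$.} Any open surjection between zero-dimensional (metric) compacta can be realized, via compatible clopen partitions, as an inverse limit of open surjections $g_n\colon A_n\to B_n$ of finite discrete spaces. Each such finite open surjection factors, by merging pairs of points of a single fiber, as a composition $g_n = h_1\circ\cdots\circ h_k$ where each $h_i$ is isomorphic in $\Comp$ to $\mathscr 2_{Z_i}$ for a finite discrete $Z_i$. By hypothesis each $F\mathscr 2_{Z_i}$ is skeletal, hence each $Fg_n$ is skeletal as a composition of skeletal maps, and continuity of $F$ together with limit-stability of skeletality would yield skeletality of the desired $F\tilde f$.

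The main obstacle will be Stage 1: namely, producing the zero-dimensional bicommutative lift in a way compatible with the weight bounds, and verifying that the limited preimage-preservation enjoyed by a 1-mec functor (only finite 1-preimages) is genuinely enough to push skeletality through this square. The combinatorial decomposition of finite open surjections into two-point mergers in Stage 2 is comparatively routine, and so is the inverse-limit stability argument once the continuous functor $F$ is applied term-by-term along a morphism of spectra with surjective bonding maps.
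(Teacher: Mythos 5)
Your ``only if'' direction is fine, and two of your structural ideas---decomposing a finite surjection into two‑point mergers homeomorphic to maps $\mathscr 2_{Z_i}$, and resolving a general compactum by a zero‑dimensional one---do occur in the paper (Lemmas~\ref{pm-l0} and \ref{pm-l1}). But both of your limit/descent steps contain genuine gaps. First, the ``limit-stability of skeletality'' invoked in Stage~2 is false: an inverse limit of skeletal (even open) surjections between finite discrete spaces along an $\w$-indexed sequence need not be skeletal. For instance, take $X_n=\{0,1\}^n\oplus\{*\}$, $Y_n=\{0,1\}^n$, and let $f_n$ be the identity on $\{0,1\}^n$ with $f_n(*)=(0,\dots,0)$; each $f_n$ is an open surjection of finite spaces, yet the limit map sends the isolated point $*$ into the nowhere dense singleton $\{\bar 0\}$ of the Cantor set. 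To pass skeletality to a limit one needs the \emph{limit squares} to be skeletal (condition (3) of Theorem~\ref{skel-char-comp}), and a 1-mec functor gives no control over the limit squares of the image spectrum: securing that control is exactly what the extra hypothesis of finite bicommutativity buys in Theorem~\ref{t1.5n}, a hypothesis Theorem~\ref{s-map} does not have. Second, in Stage~1 the descent from the skeletality of $F\tilde f_\alpha$ to that of $Ff_\alpha$ cannot be carried by ``epimorphicity of the vertical maps'' alone: you need $F$ applied to the right-hand vertical map $\tilde Y_\alpha\to Y_\alpha$ to be \emph{skeletal}, so that the composite around the square is skeletal and skeletality can then be divided out by the surjective left-hand vertical map. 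Mere surjectivity of the verticals, plus finite 1-preimage preservation, does not give this.

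The paper closes both gaps with machinery your proposal omits. Its Theorem~\ref{t5n} shows that for any skeletal surjection $f$ the map $Ff$ is skeletal at every $a\in F_\w(X)$ with $f|\supp(a)$ injective; this rests on the spectral characterization of skeletal maps via skeletal limit squares and on Lemma~\ref{fo-l1}, which says a 1-mec functor sends a densely open square to a square open at the dense set of such points. In particular $F$ preserves skeletality of irreducible maps (Corollary~\ref{fo-l3}), which is what makes the zero-dimensional resolution in Lemma~\ref{pm-l1} usable. The general case (Lemma~\ref{pm-l2}, stated for arbitrary \emph{skeletal} surjections, not merely open ones as in your plan) is then handled pointwise on the dense set of $a$ with $N_{f|\supp(a)}\subset\dot Y$ supplied by Lemma~\ref{f-l2}, by factoring $f$ near $\supp(a)$ as a skeletal map injective on $\supp(a)$ followed by a map of the form $\id\oplus(\text{finite surjection})$. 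In short, the reduction to the maps $\mathscr 2_Z$ is not achieved by inverse limits of finite approximations (which destroy skeletality) but by this local support analysis; without Theorem~\ref{t5n} or a substitute for it, your outline cannot be completed.
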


Since each open map is skeletal, the preceding theorem implies:

\begin{corollary}\label{op->skel} Each open 1-mec functor  $F:\Comp\to\Comp$ is skeletal.
\end{corollary}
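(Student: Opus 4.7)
The plan is to derive the corollary directly from Theorem \ref{s-map} by checking that openness of $F$ forces the condition appearing on the right-hand side of the theorem's equivalence. That is, I would show that for every zero-dimensional compactum $Z$ the map $F\mathscr 2_Z:F(Z\oplus 2)\to F(Z\oplus 1)$ is already open (hence a fortiori skeletal), after which the corollary is immediate.

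First, I would verify that the map $\mathscr 2_Z:Z\oplus 2\to Z\oplus 1$ is an open surjection between compact Hausdorff spaces whenever $Z$ is compact. Surjectivity is obvious from the definition; the domain and codomain are compact as finite topological sums of compacta. For openness, observe that a basic open set $U$ in $Z\oplus 2$ has the form $V$, $V\cup\{0\}$, $V\cup\{1\}$, or $V\cup\{0,1\}$ for some open $V\subset Z$; its image under $\mathscr 2_Z$ is either $V$ or $V\cup\{0\}$, each of which is open in $Z\oplus 1=Z\sqcup\{0\}$.

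Second, since $F$ is assumed open, it sends open surjections between compacta to open surjections. Therefore $F\mathscr 2_Z$ is an open surjection between compacta. Every open map is skeletal (as noted in the introduction), so $F\mathscr 2_Z$ is skeletal for every zero-dimensional compactum $Z$.

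Finally, I would invoke the ``if'' direction of Theorem \ref{s-map}: since $F$ is a 1-mec functor for which $F\mathscr 2_Z$ is skeletal for every zero-dimensional compactum $Z$, the functor $F$ itself is skeletal. There is essentially no obstacle here: the whole content of the corollary is already packaged into the characterization of Theorem \ref{s-map}, and the only small point to check is the elementary fact that $\mathscr 2_Z$ is an open surjection of compacta.
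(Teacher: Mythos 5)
Your proposal is correct and follows exactly the paper's own route: the paper derives the corollary in one line from Theorem~\ref{s-map}, using that $\mathscr 2_Z$ is an open surjection, so $F\mathscr 2_Z$ is open and hence skeletal. Your extra verification that $\mathscr 2_Z$ is an open surjection of compacta is a harmless elaboration of what the paper leaves implicit.
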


Examples~\ref{ex:TZ}--\ref{e:PDelta} presented in Section~\ref{s:eop} show that Corollary~\ref{op->skel} cannot be reversed.

Next, we discuss the interplay between the skeletality and the (finite) bicommutativity of functors.

\begin{theorem}\label{t1.5n} A 1-mecw functor $F:\Comp\to\Comp$ is skeletal if it is finitely bicommutative and finitely skeletal.
\end{theorem}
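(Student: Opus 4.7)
The plan is to invoke Theorem~\ref{s-map}: since $F$ is a $1$-mecw functor, it suffices to verify that for every zero-dimensional compact metrizable $Z$ the map $F\mathscr 2_Z:F(Z\oplus 2)\to F(Z\oplus 1)$ is skeletal. Present $Z$ as an inverse limit $Z=\invlim_{n\in\w}Z_n$ of finite zero-dimensional compacta with surjective bonding maps $\pi^n_m:Z_n\to Z_m$ ($n\ge m$); then $Z\oplus k=\invlim(Z_n\oplus k)$ for $k\in\{1,2\}$ with bonding maps $\pi^n_m\oplus\id_k$. A direct inspection of fibers shows that the squares
$$\xymatrix{Z_n\oplus 2\ar[r]^{\mathscr 2_{Z_n}}\ar[d]_{\pi^n_m\oplus\id_2}&Z_n\oplus 1\ar[d]^{\pi^n_m\oplus\id_1}\\Z_m\oplus 2\ar[r]_{\mathscr 2_{Z_m}}&Z_m\oplus 1}$$
are bicommutative, and every map appearing in them is an open surjection between finite compacta.

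Applying $F$: finite skeletality of $F$ makes each $F\mathscr 2_{Z_n}$ and each bonding map $F(\pi^n_m\oplus\id_k)$ skeletal, while finite bicommutativity yields that the $F$-image of each such square is bicommutative. Continuity of $F$ gives $F(Z\oplus k)=\invlim F(Z_n\oplus k)$ and $F\mathscr 2_Z=\invlim F\mathscr 2_{Z_n}$, and a standard compactness argument promotes the level-wise bicommutativity to bicommutativity of the limit square. Write $g:=F\mathscr 2_Z$, with $p_n,q_n$ the limit projections. For a nonempty basic open $U=p_n^{-1}(V_n)\subset F(Z\oplus 2)$ (with $V_n$ open in $F(Z_n\oplus 2)$), limit bicommutativity gives the key identity $g(U)=q_n^{-1}\bigl(F\mathscr 2_{Z_n}(V_n)\bigr)$, and skeletality of $F\mathscr 2_{Z_n}$ makes $W_n:=\int\cl F\mathscr 2_{Z_n}(V_n)$ a nonempty regular open subset of $F(Z_n\oplus 1)$. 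It remains to show $q_n^{-1}(W_n)\subset\cl g(U)$, since this displays a nonempty open subset of $\cl g(U)$ and hence yields skeletality of $g$.

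To verify that inclusion, take $y\in q_n^{-1}(W_n)$ and a basic open neighborhood $N=q_l^{-1}(N_l)$ of $y$ with $l\ge n$. Bicommutativity of the $F$-image of the $(l,n)$-square rewrites $\bigl(F(\pi^l_n\oplus\id_1)\bigr)^{-1}\bigl(F\mathscr 2_{Z_n}(V_n)\bigr)$ as $F\mathscr 2_{Z_l}(V_l)$, where $V_l:=\bigl(F(\pi^l_n\oplus\id_2)\bigr)^{-1}(V_n)$, so $N\cap g(U)\ne\emptyset$ reduces to $N_l\cap F\mathscr 2_{Z_l}(V_l)\ne\emptyset$. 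One obtains the latter by combining three ingredients: skeletality of the bonding map $F(\pi^l_n\oplus\id_1)$, the regular openness $W_n=\int\cl W_n$, and the density of $F\mathscr 2_{Z_n}(V_n)$ in $\cl F\mathscr 2_{Z_n}(V_n)$. From $F(\pi^l_n\oplus\id_1)(q_l(y))\in W_n$ and the fact that every nonempty open subset of $W_n$ meets $F\mathscr 2_{Z_n}(V_n)$, one produces a point of $N_l$ whose image under the bonding map lies in $F\mathscr 2_{Z_n}(V_n)$, i.e., a point of $N_l\cap F\mathscr 2_{Z_l}(V_l)$, whose lift through the surjection $q_l$ supplies the required point of $N\cap g(U)$.

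The main obstacle will be this final density/lifting argument: it juggles open sets across two levels of the inverse system and relies essentially on the interplay of all three ingredients listed above, with the regular-open character of $W_n$ keeping it sandwiched between $F\mathscr 2_{Z_n}(V_n)$ and its closure. This is a finite-dimensional analogue of \v S\v cepin's inverse-limit technique for bicommutative spectra.
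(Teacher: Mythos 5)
Your overall strategy coincides with the paper's: reduce via Theorem~\ref{s-map} to the maps $F\mathscr 2_Z$ for zero-dimensional metrizable $Z$, write $Z$ as an inverse limit of finite spaces, use finite skeletality to make each $F\mathscr 2_{Z_n}$ skeletal and finite bicommutativity to make the $F$-images of the bonding squares bicommutative, and pass the bicommutativity to the limit squares. Up to that point everything is sound. The gap is in your final density/lifting argument, and it is a real one. You set $W_n=\int\cl F\mathscr 2_{Z_n}(V_n)$ and try to prove $q_n^{-1}(W_n)\subset\cl g(U)$, reducing to showing that $F(\pi^l_n\oplus\id_1)(M)$ meets $S:=F\mathscr 2_{Z_n}(V_n)$, where $M=N_l\cap\bigl(F(\pi^l_n\oplus\id_1)\bigr)^{-1}(W_n)$. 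Your three ingredients give you a nonempty open set $T=\int\cl F(\pi^l_n\oplus\id_1)(M)\subset W_n\subset\cl S$, hence a point of $T\cap S$ and, separately, a point of $T\cap F(\pi^l_n\oplus\id_1)(M)$ --- but nothing forces these to be the \emph{same} point. Both $S$ and $F(\pi^l_n\oplus\id_1)(M)$ are merely continuous images of open sets; two such sets can each be dense in $T$ and still be disjoint (rationals versus irrationals), so the argument does not close.

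The fix is to arrange that the nonempty open set lies in the image itself rather than in its closure, and it makes your whole two-level argument unnecessary. Shrink the basic open set first: choose open $V_n'$ with $\emptyset\ne V_n'\subset\cl V_n'\subset V_n$. By compactness $\cl F\mathscr 2_{Z_n}(V_n')\subset F\mathscr 2_{Z_n}(\cl V_n')\subset F\mathscr 2_{Z_n}(V_n)$, so $W_n:=\int\cl F\mathscr 2_{Z_n}(V_n')$ is a nonempty open set contained in $F\mathscr 2_{Z_n}(V_n)$, and your bicommutativity identity gives $q_n^{-1}(W_n)\subset q_n^{-1}(F\mathscr 2_{Z_n}(V_n))=g(U)$ outright --- no closures, no level $l$, no lifting. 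This is exactly the content of Remark~\ref{rem1a}(2) (a bicommutative square with skeletal bottom map is a skeletal square), after which the paper simply quotes the fact that a morphism with skeletal limit squares has skeletal limit map (Proposition 3.1 of \cite{BKM}, i.e.\ the implication (3)$\Rightarrow$(1) of Theorem~\ref{skel-char-comp}). So your skeleton is correct, but the last step should be rerouted through the skeletal-square machinery (or patched as above) rather than through the density argument you propose.
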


This criterion should be compared with the following characterization of open functors due to \v S\v cepin, see Propositions 3.18 and 3.19 of \cite{Sh}.

\begin{theorem}[\v S\v cepin]\label{scepin} A normal functor $F:\Comp\to\Comp$ is open if and only if $F$ is bicommutative and finitely open.
\end{theorem}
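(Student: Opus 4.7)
\textbf{Proof plan for Theorem~\ref{scepin}.}
I would prove the two implications separately, exploiting normality of $F$ (which gives preservation of preimages) together with the stated hypotheses.

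\emph{Sufficiency} $(\Leftarrow)$. Assume $F$ is bicommutative and finitely open, and let $f:X\to Y$ be an open surjection between compacta. Using a spectral approximation theorem for open maps of the type found in \cite{Sh} and \cite[\S2]{TZ}, I would represent $f$ as an inverse limit $f=\invlim f_\alpha$ of open surjections $f_\alpha:X_\alpha\to Y_\alpha$ between finite compacta whose bonding squares are bicommutative. Continuity and bicommutativity of $F$ then yield $Ff=\invlim Ff_\alpha$ as an inverse limit with bicommutative bonding squares, and each $Ff_\alpha$ is open by finite openness. The standard fact that an inverse limit of open surjections between compacta with bicommutative bonding squares is again open then concludes that $Ff$ is open.

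\emph{Necessity} $(\Rightarrow)$. Assume $F$ is open. Finite openness is a special case. To verify bicommutativity, let $\mathcal D$ be a bicommutative square of surjections as in the statement and form the pullback $P=X\times_Y\tilde Y$ with projections $\pi_X:P\to X$ and $\pi_{\tilde Y}:P\to\tilde Y$ and canonical map $\phi:\tilde X\to P$, $\tilde x\mapsto(p_X(\tilde x),\tilde f(\tilde x))$. Bicommutativity of $\mathcal D$ amounts to surjectivity of $\phi$, hence (since $F$ is epimorphic) of $F\phi:F\tilde X\to FP$. Thus it suffices to establish surjectivity of the natural comparison map $FP\to FX\times_{FY}F\tilde Y$. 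I would derive this from openness of $F$: the pullback projection $\pi_X$ fits into a bicommutative square with $p_Y$, and normality (preservation of preimages) reduces the fiberwise description of $FP$ over $FX$ to that of $F\tilde Y$ over $FY$; openness of $F$ applied to $p_Y$ and to the auxiliary maps used in \v S\v cepin's construction then provides the required witnesses in the fibers of $F\pi_X$.

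The main obstacle is the necessity direction. The passage from openness of $F$ to bicommutativity reduces to a pullback-preservation-type statement for the comparison map $FP\to FX\times_{FY}F\tilde Y$, which is not formal from the definition of openness. \v S\v cepin's original proof exploits the full strength of openness through a delicate selection argument in the fibers of $Fp_Y$, and faithfully reproducing it in detail is the technically demanding part of the argument.
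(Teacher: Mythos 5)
First, note that the paper contains no proof of Theorem~\ref{scepin}: it is quoted from \v S\v cepin (Propositions 3.18 and 3.19 of \cite{Sh}) and used as a point of comparison for Theorem~\ref{t1.5n}, so there is no in-paper argument to measure yours against. Judged on its own terms, your sketch has two genuine gaps.

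In the sufficiency direction, the opening move --- representing an arbitrary open surjection $f:X\to Y$ between compacta as an inverse limit of open surjections between \emph{finite} compacta --- is not available: only zero-dimensional compacta are inverse limits of finite spaces, so this reduction works only when $X$ and $Y$ are zero-dimensional. Bridging from finite (equivalently, zero-dimensional) spaces to arbitrary compacta is precisely the hard content of Proposition 3.19 of \cite{Sh}, and it is absent from your outline. It is instructive to compare with how the paper handles the parallel skeletal statement, Theorem~\ref{t1.5n}: there the reduction to zero-dimensional spaces is achieved not by a spectral decomposition of a general $f$ but via the characterization Theorem~\ref{s-map}, whose proof passes from zero-dimensional to arbitrary compacta using irreducible zero-dimensional resolutions (Lemma~\ref{pm-l1} together with Corollary~\ref{fo-l3}). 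That device works because irreducible maps are skeletal; it has no analogue for openness, since irreducible maps are essentially never open, which is exactly why the open case needs \v S\v cepin's separate and more delicate argument. In the necessity direction, your reduction of the bicommutativity of $F\mathcal D$ to the surjectivity of the comparison map $FP\to FX\times_{FY}F\tilde Y$ is a legitimate first step (it reduces the problem to bicommutativity of pullback squares), but the remaining step --- deriving that surjectivity from the openness of $F$ --- is only gestured at, and as you yourself concede, this is where the actual work lies. As it stands, the proposal is a map of where a proof would have to go rather than a proof.
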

\smallskip

Now let us discuss the problem of preservation of skeletally generated compacta by normal functors. Following \cite{Val} we say that a compact Hausdorff space $X$ is {\em skeletally generated} if $X$ is homeomorphic to the limit of an inverse continuous $\w$-spectrum $\mathcal S=\{X_\alpha,\pi_\alpha^\beta,\Sigma\}$ consisting of metrizable compacta and surjective skeletal bonding projections $\pi_\alpha^\beta:X_\beta\to X_\alpha$.

According to \cite{kp8} or \cite{Val}, a compact Hausdorff space $X$ is skeletally generated if and only if the first player has a winning strategy in the following open-open game. The player I starts the game selecting a non-empty open set $V_0$ and the player II responds with a non-empty open set $W_0\subset V_0$. At the $n$-th inning the player I chooses a non-empty open set $V_n$ and player II responds with a non-empty open set $W_n\subset V_n$. At the end of the game the player I is declared the winner if the union $\bigcup_{n\in\w}W_n$ is dense in $X$. Otherwise the player II wins the game.

The class of skeletally generated compacta contains all openly generated compacta and all continuous images of openly generated compacta, see \cite{Val}. In particular, each dyadic compactum is skeletally generated. Skeletally generated compacta share some properties of dyadic compacta. In particular, each skeletally generated compactum has countable cellularity, see \cite{dkz} or \cite{kp8}.

It is known that in general, normal functors do not preserve openly generated compacta. In fact, a normal functor $F:\Comp\to\Comp$ preserves the class of openly generated spaces if and only if $F$ is open, see \cite[\S4.1]{Sh}.
This contrasts with the following theorem.

\begin{theorem}\label{s-space} Each 1-mecw functor $F:\Comp\to\Comp$ preserves the class of skeletally generated compacta.
\end{theorem}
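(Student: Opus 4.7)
My plan starts from the spectral characterization: fix a continuous $\w$-spectrum $\mathcal{S}=\{X_\alpha,\pi_\alpha^\beta,\Sigma\}$ of metrizable compacta with surjective skeletal bondings whose limit is $X$. Since $F$ is continuous, weight-preserving, and epimorphic, applying it coordinatewise produces a continuous $\w$-spectrum $F\mathcal{S}=\{FX_\alpha,F\pi_\alpha^\beta,\Sigma\}$ of metrizable compacta with surjective bondings, whose limit is $FX$. Thus the task reduces to upgrading this presentation to one with skeletal bondings, or verifying the open-open game definition of skeletally generated directly on $FX$.

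Since $F$ is only 1-mecw and hence not assumed skeletal, the bondings $F\pi_\alpha^\beta$ need not be skeletal; the definition of skeletally generated does not apply to $F\mathcal{S}$ verbatim. To bypass this obstacle, I would pass to the open-open game characterization. Given a winning strategy $\sigma$ for player I on $X$, I would construct a winning strategy $\tilde\sigma$ on $FX$ by relaying player II's moves through the spectrum: at round $n$, replace II's response $\tilde W_{n-1}\subset FX$ by a basic open subset $(F\pi_{\beta_{n-1}})^{-1}(Q_{n-1})$, translate it into a non-empty open subset $W_{n-1}=\pi_{\beta_{n-1}}^{-1}(U_{n-1})\subset X$ using the monomorphic and 1-preimage-preserving structure of $F$, apply $\sigma$ to obtain $V_n=\sigma(W_0,\dots,W_{n-1})\subset X$, and lift $V_n$ back to a basic open set $\tilde V_n\subset FX$ at a sufficiently high coordinate.

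The main difficulty is the extraction step $Q_{n-1}\mapsto U_{n-1}$: because $X_{\beta_{n-1}}$ does not embed in $FX_{\beta_{n-1}}$ as an open subset, there is no canonical way to produce $U_{n-1}$ from $Q_{n-1}$, and one must exploit the monomorphicity and preservation of finite 1-preimages to arrange that $U_{n-1}$ is non-empty and that the correspondence propagates coherently across successive rounds. Once this extraction is in place, density of $\bigcup_n W_n$ in $X$ (guaranteed by $\sigma$ being winning) combined with the continuity of the spectrum $F\mathcal{S}$ would yield density of $\bigcup_n\tilde W_n$ in $FX$, completing the verification that $\tilde\sigma$ is winning.
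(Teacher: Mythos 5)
There is a genuine gap, and it sits exactly where you stop: the two steps you flag as ``the main difficulty'' are the proof, and neither is supplied. First, the extraction step $Q_{n-1}\mapsto U_{n-1}$ has no construction; you correctly observe that $X_\beta$ does not sit inside $FX_\beta$ as an open subspace, but then only assert that monomorphicity and preservation of finite 1-preimages ``must be exploited'' without saying how a non-empty open $U_{n-1}\subset X_{\beta_{n-1}}$ is actually produced from $Q_{n-1}\subset FX_{\beta_{n-1}}$. Second, and more seriously, the final density transfer is false for any naive correspondence between the $W_n$ and the $\tilde W_n$: density of $\bigcup_n W_n$ in $X$ does not yield density of $\bigcup_n \tilde W_n$ in $FX$. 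For instance, for the hyperspace functor, if $\tilde W_n$ is built over $W_n$ (say $\tilde W_n\subset\{A: A\subset W_n\}$), a finite set meeting several $W_n$ but contained in none of them witnesses non-density, even when $\bigcup_n W_n$ is dense in $X$. An element of $F_\w(X)$ whose support is scattered across different $W_n$'s is not captured by any single $\tilde W_n$, so the relay strategy as described cannot be winning.

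The paper's route avoids the game entirely and contains the key idea your proposal is missing. One does not need $F$ to preserve skeletality of arbitrary skeletal maps: Theorem~\ref{t5n} shows that a 1-mec functor sends a skeletal surjection $f:X\to Y$ to a skeletal map whenever $\dot Y\subset D_f$, i.e.\ every isolated point of $Y$ has a one-point fiber. The whole work of the proof (Claim~\ref{ps-cl}) is then a spectral refinement: using countable cellularity of $X$ (so $\dot X$ is countable) one passes to a closed cofinal subset $\Sigma'$ of indices on which $\pi_\alpha(\dot X)=\dot X_\alpha$, which forces $\dot X_\alpha\subset D_{\pi_\alpha}\subset D_{\pi^\beta_\alpha}$ for all $\alpha\le\beta$ in $\Sigma'$. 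Theorem~\ref{t5n} then makes every $F\pi^\beta_\alpha$ skeletal, and continuity, epimorphicity and weight preservation give $FX$ as the limit of a continuous $\w$-spectrum with surjective skeletal bondings. Your opening reduction (apply $F$ coordinatewise and try to upgrade the presentation) is the right frame, but the upgrade is achieved by shrinking the index set so that the hypothesis of Theorem~\ref{t5n} holds, not by translating game strategies.
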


For preimage preserving mecw-functors $F$ with $F1\ne F2$ this theorem can be improved as follows.
Below we identify a natural number $n$ with the discrete space $n=\{0,\dots,n-1\}$.

\begin{theorem}\label{ps-space} Let $F:\Comp\to\Comp$ be a preimage preserving mecw-functor with $F1\ne F2$. A compact Hausdorff space $X$ is skeletally generated if and only if the space $FX$ is skeletally generated.
\end{theorem}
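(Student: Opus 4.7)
The forward implication, that $X$ skeletally generated implies $FX$ skeletally generated, is immediate from Theorem~\ref{s-space}: preimage preservation implies $1$-preimage preservation, so $F$ is in particular a $1$-mecw functor, to which Theorem~\ref{s-space} applies. The substance of the statement is the converse direction. Assuming $FX$ is skeletally generated, the plan is to use the open-open game characterization and to construct a winning strategy $\tau$ for player I on $X$ from a given winning strategy $\sigma$ for player I on $FX$.

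The first technical step is a reflection lemma: using $F1 \ne F2$ and preimage preservation, I will show that $F$ reflects strict inclusions of compacta, i.e., $A \subsetneq X$ closed implies $FA \subsetneq FX$. To prove it, pick $x_0\in X\setminus A$ and consider the bicommutative pullback square
\[
\xymatrix{A \ar[r] \ar[d] & X \ar[d]^{q}\\ 1 \ar[r]^-{*} & X/A}
\]
where $q\colon X\to X/A$ collapses $A$ to a point $*$. Preimage preservation identifies $FA$ with $(Fq)^{-1}(Ff(F1))$ inside $FX$, where $f\colon 1\to X/A$ is the inclusion of $*$. Factoring $f$ through $1\hookrightarrow 2\hookrightarrow X/A$ via the two-point subspace $\{*,[x_0]\}\cong 2$ and invoking $F1\ne F2$ together with monomorphicity yields $Ff(F1)\subsetneq F(X/A)$, and hence $FA\subsetneq FX$. (Note $F1\ne\emptyset$: otherwise epimorphicity would force $FY=\emptyset$ for every non-empty compactum $Y$, giving the contradiction $F1=F2=\emptyset$.) As a consequence, the lift $\hat U := FX \setminus F(X\setminus U)$ is a non-empty open subset of $FX$ whenever $U\subset X$ is non-empty open.

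The strategy $\tau$ is then defined by simulating a play on $FX$. Given player II's past moves $W_0,\ldots,W_{n-1}$ on $X$, set $\tilde V_n := \sigma(\hat W_0,\ldots,\hat W_{n-1})$ and let $\tau(W_0,\ldots,W_{n-1})=V_n$ be a non-empty open subset of $\overline{\bigcup_{a\in\tilde V_n}\supp(a)}\subset X$. For any II-response $W_n\subset V_n$, the intersection $\hat W_n\cap\tilde V_n$ is then a non-empty open subset of $\tilde V_n$ (since the choice of $V_n$ forces $W_n\cap\bigcup_{a\in\tilde V_n}\supp(a)$ to be non-empty) and serves as II's move in the simulated $FX$-game. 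The winning property of $\tau$ is established by contradiction: if a non-empty open $U\subset X$ were disjoint from every $W_n$, the set $\check U:=\{a\in FX:\supp(a)\subset U\}$ --- open by the upper semi-continuity of $\supp$, and non-empty because it contains $F\{x\}\cong F1$ for any $x\in U$ --- would be disjoint from every $\hat W_n$, contradicting the density of $\bigcup_n(\hat W_n\cap\tilde V_n)$ in $FX$ forced by $\sigma$.

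The hard part will be to verify that $\overline{\bigcup_{a\in\tilde V_n}\supp(a)}$ always has non-empty interior in $X$, so that $V_n$ can indeed be chosen non-empty open. The plan is to deduce this from the lower semi-continuity of the support encoded in the identity $\{a:\supp(a)\cap U\ne\emptyset\}=\hat U$ combined with the reflection lemma; should this fail for some $\tilde V_n$ directly, an auxiliary application of $\sigma$ against a judiciously chosen alternative play on $FX$ --- in which player II plays $\check U$ for a suitable dense open $U\subset X$ --- is expected to produce the required contradiction and pin down the interior.
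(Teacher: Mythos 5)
Your forward direction is fine and matches the paper (Theorem~\ref{s-space} applies since preimage preservation implies $1$-preimage preservation). Your converse, however, takes a game-theoretic route that the paper does not take, and it contains a genuine, self-acknowledged gap at its load-bearing step. The entire strategy $\tau$ only makes sense if, for every non-empty open $\tilde V\subset FX$, the set $\overline{\bigcup_{a\in\tilde V\cap F_\w(X)}\supp(a)}$ has non-empty interior in $X$. Since each finitely supported $a$ with non-empty support lies in $F(\supp(a))$, this closure $A$ satisfies $\tilde V\cap F_\w(X)\subset FA$, hence $\overline{\tilde V}\subset FA$; so your claim is equivalent to the assertion that $FA$ is nowhere dense in $FX$ whenever $A$ is nowhere dense and closed in $X$. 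That is a substantive statement about how $F$ interacts with Baire category, not a formal consequence of the reflection lemma $FA\subsetneq FX$, and your text explicitly defers it to an ``auxiliary application of $\sigma$'' that is never carried out. A second unproved ingredient is the openness of $\check U=\{a:\supp(a)\subset U\}$, which you attribute to ``upper semi-continuity of $\supp$''; that property is not among the hypotheses on a preimage-preserving mecw-functor and would itself need an argument (note also that $\supp$ is only defined on $F_\w(X)$, so $\check U$, $\tilde V_n$ and the simulated moves all need to be rephrased). As written, the converse direction is a plan, not a proof.

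The paper's proof avoids all of this by staying at the spectral level. It writes $X=\invlim\{X_\alpha,p^\beta_\alpha,\Sigma\}$ as an $\w$-spectrum, so that $FX=\invlim\{FX_\alpha,Fp^\beta_\alpha,\Sigma\}$ by continuity and weight preservation; since $FX$ is skeletally generated, \v S\v cepin's Spectral Theorem lets one find an $\w$-closed cofinal $\Sigma'\subset\Sigma$ on which the bonding maps $Fp^\beta_\alpha$ are skeletal; then Theorem~\ref{preimage} (whose proof is essentially your reflection lemma plus the density of $F_\w(X\setminus N)$) reflects skeletality from $Fp^\beta_\alpha$ back to $p^\beta_\alpha$, exhibiting $X$ as the limit of a skeletal $\w$-spectrum. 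If you want to salvage your approach, the missing nowhere-density statement is exactly where the work lies; otherwise, the shortest repair is to replace the game argument by the Spectral Theorem plus Theorem~\ref{preimage}.
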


\begin{remark} Among the properties composing the definition of 1-mec functor the least studied is the property of preservation of finite 1-preimages. It is clear that a functor $F:\Comp\to\Comp$ preserves (finite) 1-preimages if it preserves (finite) preimages. On the other hand, the functor of superextension $\lambda$ and the functor of order-preserving functionals $O$ preserve 1-preimages but fail to preserve finite preimages, see \cite{KR}, \cite[2.3.2, 2.3.6]{TZ}. A simple example of a mec functor that does not preserve finite 1-preimages is $Pr^3$, the functor of the third projective power, see \cite[2.5.3]{TZ}. Another example of such a functor is $E$, the functor of non-expanding functionals, see \cite{KR}. By Theorem 1 of \cite{KR}, a continuous monomorphic functor $F:\Comp\to\Comp$ preserves 1-preimages if and only if its Chigogidze extension $F_\beta:\Tych\to\Tych$ preserves embeddings of Tychonoff spaces.
\end{remark}

Theorems~\ref{s-map}, \ref{t1.5n}, \ref{s-space} and \ref{ps-space} will be proved in Sections~\ref{s:s-map}---\ref{s:ps-space} after some preliminary work done in Sections~\ref{s:sm}--\ref{s:fsm}. Several examples of skeletal and non-skeletal functors will be given in Section~\ref{s:eop}. In that section we also pose some open problems related to skeletal functors.

\section{Skeletal maps and skeletal squares}\label{s:sm}

In this section we recall the necessary information on skeletal maps between compact spaces. First we introduce the necessary definitions.

\begin{definition} A map $f:X\to Y$ between two topological spaces is defined to be
\begin{itemize}
\item {\em skeletal at a point $x\in X$} if for each neighborhood $U\subset X$ of $x$ the closure $\cl_Y(f(U))$ of $f(U)$ has non-empty interior in $Y$;
\item {\em skeletal at a subset} $A\subset X$ if $f$ is skeletal at each point $x\in A$;
\smallskip

\item {\em open at a point $x\in X$} if for each neighborhood $U\subset X$ of $x$ the image $f(U)$ is a neighborhood of $f(x)$;
\item {\em open at a subset $A\subset X$} if $f$ is open at each point $x\in A$;
\item {\em densely open} if $f$ is open at some dense subset $A\subset X$.
\end{itemize}
\end{definition}

It is easy to see that each densely open map is skeletal. The converse is true for skeletal maps between metrizable compacta:

\begin{theorem}\label{l1c} A map $f:X\to Y$ between compact metrizable spaces is skeletal if and only if it is open at a dense $G_\delta$-subset of $X$.
\end{theorem}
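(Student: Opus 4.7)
The plan is as follows. The easy direction $(\Leftarrow)$ is immediate: if $f$ is open at each point of a dense subset $A\subset X$, then for any nonempty open $U\subset X$ and any $x\in U\cap A$ the image $f(U)$ is a neighborhood of $f(x)$, so $\int f(U)\ne\emptyset$ and hence $\int\cl f(U)\ne\emptyset$. By the standard reformulation of the definition (complement nowhere dense to somewhere dense), this means $f$ is skeletal. For the converse, I fix a countable base $\{U_n\}_{n\in\w}$ of the compact metrizable space $X$ and set $W_n:=\int\bigl(\cl f(U_n)\bigr)$, which is nonempty open whenever $U_n$ is nonempty, again by the reformulation of skeletality.

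The central step will be to prove that the closed set
\[
A_n \,:=\, \cl_X\bigl(U_n\setminus f^{-1}(W_n)\bigr)
\]
is nowhere dense in $X$. Suppose otherwise: some nonempty open $V\subset A_n$ exists. Since $A_n\subset\cl U_n$ and $\partial U_n$ is nowhere dense, the set $V_0:=V\cap U_n$ is nonempty open and still lies in $A_n$. Skeletality applied to $V_0$ gives a nonempty open $\int\bigl(\cl f(V_0)\bigr)\subset W_n$ (the inclusion because $\cl f(V_0)\subset\cl f(U_n)$), and density of $f(V_0)$ in its own closure yields a point $y\in f(V_0)\cap W_n$. Pulling back, $V_0\cap f^{-1}(W_n)$ is a nonempty open subset of $V_0\subset\cl\bigl(U_n\setminus f^{-1}(W_n)\bigr)$, so it must meet $U_n\setminus f^{-1}(W_n)$, contradicting its disjointness from $f^{-1}(W_n)$.

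With the claim established, $D:=X\setminus\bigcup_{n\in\w}A_n$ is a dense $G_\delta$-subset of the Baire space $X$. To see $f$ is open at each $x\in D$, take any open $U\ni x$ and, using regularity of the compact Hausdorff space $X$, pick $U_n$ with $x\in U_n$ and $\cl U_n\subset U$. Since $x\in U_n\setminus A_n$, we have $f(x)\in W_n$, and the chain $W_n\subset\cl f(U_n)\subset f(\cl U_n)\subset f(U)$ shows that $f(U)$ contains the open neighborhood $W_n$ of $f(x)$, proving openness of $f$ at $x$. The main obstacle is the mismatch between $\int\bigl(\cl f(U_n)\bigr)$ (which skeletality controls) and $\int f(U_n)$ (which openness at a point requires); the regularity trick of sandwiching $\cl U_n$ between $U_n$ and $U$ is precisely what bridges this gap at points of $D$.
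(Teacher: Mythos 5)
Your proof is correct. Note that the paper itself does not prove Theorem~\ref{l1c} --- it only cites \cite{BKM} --- so there is no in-paper argument to compare against; but your Baire-category argument (showing each $A_n=\cl_X\bigl(U_n\setminus f^{-1}(W_n)\bigr)$ is nowhere dense and taking $D=X\setminus\bigcup_n A_n$) is self-contained and sound, including the crucial compactness step $W_n\subset\cl f(U_n)\subset f(\cl U_n)\subset f(U)$ that converts control of $\int\cl f(U_n)$ into genuine openness at points of $D$. The one ingredient you invoke without proof, namely that skeletality is equivalent to ``$\int\cl f(U)\ne\emptyset$ for every nonempty open $U$,'' is standard but is used in both directions and deserves its one-line verification: if $A\subset Y$ is nowhere dense and $U\subset\cl f^{-1}(A)$ is nonempty open, then $f(U)\subset\cl f\bigl(U\cap f^{-1}(A)\bigr)\subset\cl A$, so $\int\cl f(U)=\emptyset$; conversely a nonempty open $U$ with $\int\cl f(U)=\emptyset$ lies in the preimage of the nowhere dense set $\cl f(U)$.
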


This theorem has been proved in \cite{BKM}. The metrizability of $X$ is essential in this theorem as shown by the projection $\pr:A\to[0,1]$ of the Aleksandrov ``two arrows'' space $A$ onto the interval, see \cite[3.10.C]{En}. This projection is skeletal but is open at no point of $A$.

A characterization of skeletal maps between non-metrizable compact spaces was given in \cite{BKM} in terms of morphisms of inverse $\w$-spectra with skeletal limit or bonding squares.

\begin{definition} Let $\mathcal D$ denote a commutative diagram $$\xymatrix{
\tilde X\ar[r]^{\tilde f}\ar[d]_{p_X}&\tilde Y\ar[d]^{p_Y}\\
X\ar[r]_{f}&Y
}$$ consisting of maps between compact spaces.
The square $\mathcal D$ is defined to be
\begin{itemize}
\item {\em open at a point $x\in X$} if for each neighborhood $U\subset X$ of $x$ the point $f(x)$ has a neighborhood $V\subset Y$ such that $V\subset f(U)$ and $p^{-1}_Y(V)\subset\tilde f(p_X^{-1}(U))$;
\item {\em open at a subset $A\subset X$} if $\mathcal D$ is open at each point $x\in A$;
\item {\em densely open} if it is open at some dense subset $A\subset X$;
\smallskip

\item {\em skeletal at a point $x\in X$} if for each neighborhood $U\subset X$ of $x$ there is a non-empty open set $V\subset Y$ such that $V\subset f(U)$ and $p_Y^{-1}(V)\subset \tilde f(p_X^{-1}(U))$;
\item {\em skeletal at a subset} $A\subset X$ if $\mathcal D$ is skeletal at each point $x\in A$;
\item {\em skeletal} if $\mathcal D$ is skeletal at $X$.
\end{itemize}
\end{definition}

\begin{remark}\label{rem1a} Let $A$ be a subset of the space $X$ in the diagram $\mathcal D$.
\begin{enumerate}
\item If the diagram $\mathcal D$ is skeletal (resp. open) at $A$, then the map $f$ is skeletal (resp. open) at $A$;
\item The diagram $\mathcal D$ is skeletal (resp. open) at $A$ if it is bicommutative and the map $f$ is skeletal (resp. open) at $A$;
\item If the diagram $\mathcal D$ is open at $A$, then it is {\em bicommutative at $A$} in the sense that $\tilde f(p_X^{-1}(x))=p_Y^{-1}(f(x))$ for all points $x\in A$.
\end{enumerate}
\end{remark}

\begin{remark}\label{rem1} A map $f:X\to Y$ is skeletal (resp. open) at a subset $A\subset X$ if and only if the square  $$\xymatrix{
X\ar[r]^{f}\ar[d]_{\id_X}&Y\ar[d]^{\id_Y}\\
X\ar[r]_{f}&Y
}$$is skeletal (resp. open) at the subset $A$.
\end{remark}

It is easy to see that each densely open square is skeletal. The converse is true in the metrizable case. The following lemma proved in \cite{BKM} is a ``square'' counterpart of the characterization Theorem~\ref{l1c}.

\begin{lemma}\label{l1b} If in the diagram $\mathcal D$ the space $X$ is metrizable and the map $p_Y$ is surjective, then the square $\mathcal D$ is skeletal if and only if $\mathcal D$ is open at a dense $G_\delta$-subset of $X$.
\end{lemma}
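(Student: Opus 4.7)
The ``if'' direction is formal: suppose $\mathcal D$ is open at every point of a dense subset $A\subset X$. For any $x\in X$ and any neighborhood $U$ of $x$, pick $a\in A\cap U$; openness of $\mathcal D$ at $a$ applied to the neighborhood $U$ yields an open $V\ni f(a)$ in $Y$ satisfying $V\subset f(U)$ and $p_Y^{-1}(V)\subset\tilde f(p_X^{-1}(U))$, and this non-empty $V$ witnesses skeletality of $\mathcal D$ at $x$ with respect to $U$.

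For the nontrivial direction the plan is to imitate, for squares, the proof behind Theorem~\ref{l1c}. Since $X$ is compact metrizable, fix a countable base $\{U_n:n\in\omega\}$ of $X$ and, for each $n$, set
$$V_n^* = \bigcup\Big\{V\subset Y\text{ open}: V\subset f(U_n)\text{ and } p_Y^{-1}(V)\subset\tilde f(p_X^{-1}(U_n))\Big\}.$$
Both defining inclusions are preserved by taking unions, so $V_n^*$ is itself the maximal admissible open set. Put $W_n=U_n\cap f^{-1}(V_n^*)$, which is open in $X$.

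The crux is the density of $W_n$ in $U_n$. Given any non-empty open $U\subset U_n$, the skeletality of $\mathcal D$ at a point of $U$ (with respect to the neighborhood $U$) produces a non-empty open $V'\subset Y$ with $V'\subset f(U)\subset f(U_n)$ and $p_Y^{-1}(V')\subset\tilde f(p_X^{-1}(U))\subset\tilde f(p_X^{-1}(U_n))$; by maximality $V'\subset V_n^*$, so $U\cap W_n\supset U\cap f^{-1}(V')\ne\emptyset$. It follows that $D_n:=W_n\cup(X\setminus\overline{U_n})$ is open and dense in $X$ (any open set disjoint from $U_n$ is, being open, disjoint from $\overline{U_n}$), and the Baire theorem then makes $D:=\bigcap_{n\in\omega}D_n$ a dense $G_\delta$ subset of $X$.

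Finally, I would verify openness of $\mathcal D$ at each $x\in D$: given a neighborhood $U$ of $x$, choose a basic $U_n$ with $x\in U_n\subset U$; since $x\in D_n\cap U_n=W_n$, we have $f(x)\in V_n^*$, and $V_n^*$ is an open neighborhood of $f(x)$ with $V_n^*\subset f(U_n)\subset f(U)$ and $p_Y^{-1}(V_n^*)\subset\tilde f(p_X^{-1}(U_n))\subset\tilde f(p_X^{-1}(U))$, confirming the openness of $\mathcal D$ at $x$. The main obstacle in this plan is the density step: skeletality only yields witnesses pointwise and the witnesses depend on the chosen neighborhood, so one needs a single ``uniform'' witness valid over a dense open subset of each basic $U_n$. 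The remedy is to absorb every pointwise witness into the maximal admissible $V_n^*$; the rest of the argument is then a standard Baire-category assembly.
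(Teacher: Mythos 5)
Your proof is correct: the ``if'' direction is the routine density argument, and for the converse the maximal admissible open sets $V_n^*$ (whose defining inclusions are indeed preserved under unions) together with the Baire category theorem give exactly the required dense $G_\delta$ set of points of openness; you even avoid using the surjectivity of $p_Y$, which the statement assumes but your argument never needs. The paper itself gives no proof of this lemma, deferring to the reference \cite{BKM}, and your argument is precisely the expected ``square'' analogue of the Baire-category proof of Theorem~\ref{l1c}, so there is nothing further to compare.
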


In order to formulate the spectral characterization of skeletal maps, we need to recall some information about inverse spectra from \cite[\S2.5]{En} and \cite[Ch.1]{Chi}.

For an inverse spectrum $\mathcal S=\{X_\alpha,p_\alpha^\beta,\Sigma\}$ consisting of topological spaces and continuous bonding maps, by $$\lim \mathcal S=\{(x_\alpha)_{\alpha\in \Sigma}\in\prod_{\alpha\in \Sigma}X_\alpha:\forall \alpha\le \beta \;\; p^\beta_\alpha(x_\beta)=x_\alpha\}$$we denote the limit of $\mathcal S$ and by $p_\alpha:\lim \mathcal S\to X_\alpha$, $p_\alpha:x\mapsto x_\alpha$, the limit projections.

An inverse spectrum $\mathcal S=\{X_\alpha,p_\alpha^\beta,\Sigma\}$ is called an {\em $\w$-spectrum} if
\begin{itemize}
\item each space $X_\alpha$, $\alpha\in \Sigma$, has countable weight;
\item the index set $\Sigma$ is  {\em $\w$-complete} in the sense that each countable subset $\Sigma'\subset \Sigma$ has the smallest upper bound $\sup \Sigma'$ in $\Sigma$;
\item the spectrum $\mathcal S$ is {\em $\w$-continuous} in the sense that for any countable directed subset $\Sigma'\subset \Sigma$ with $\gamma=\sup \Sigma$ the limit map $\lim p^\gamma_\alpha:X_\gamma\to\lim \{X_\alpha,p^\beta_\alpha,\Sigma'\}$ is a homeomorphism.
\end{itemize}

Let $\mathcal S_X=\{X_\alpha,p^\beta_\alpha,\Sigma\}$ and $\mathcal S_Y=\{Y_\alpha,\pi^\beta_\alpha,\Sigma\}$ be  two inverse spectra indexed by the same directed partially ordered set $\Sigma$. A {\em morphism} $\{f_\alpha\}_{\alpha\in \Sigma}:\mathcal S_X\to\mathcal S_Y$ between these spectra is a family
of maps $\{f_\alpha:X_\alpha\to Y_\alpha\}_{\alpha\in \Sigma}$ such that $f_\alpha\circ p^\beta_\alpha=\pi^\beta_\alpha\circ f_\beta$ for any elements $\alpha\le\beta$ in $\Sigma$.

Each morphism $\{f_\alpha\}_{\alpha\in \Sigma}:\mathcal S_X\to\mathcal S_Y$ between inverse spectra induces the limit map
$$\lim f_\alpha:\lim\mathcal S_X\to\lim \mathcal S_Y,\;\;\lim f_\alpha:(x_\alpha)_{\alpha\in \Sigma}\mapsto (f_\alpha(x_\alpha))_{\alpha\in \Sigma }$$ between the limit spaces of these spectra.

Following \cite{BKM} we say that a morphism $\{f_\alpha\}_{\alpha\in \Sigma}:\mathcal S_X\to \mathcal S_Y$ between two inverse spectra $\mathcal S_X=\{X_\alpha,p_\alpha^\beta,\Sigma\}$ and $\mathcal S_Y=\{Y_\alpha,\pi_\alpha^\beta,\Sigma\}$:
\begin{itemize}
\item {\em is skeletal} if each map $f_\alpha:X_\alpha\to Y_\alpha$, $\alpha\in \Sigma$, is skeletal;
\item {\em has skeletal limit squares} if for every $\alpha\in \Sigma$ the commutative square
$$\xymatrix{
\invlim \mathcal S_X\ar[rr]^{\invlim f_\alpha}\ar[d]_{p_\alpha}&&\invlim \mathcal S_Y\ar[d]^{\pi_\alpha}\\
X_\alpha\ar[rr]_{f_\alpha}&&Y_\alpha}
$$is skeletal.
\end{itemize}

We say that two maps $f:X\to Y$ and $f':X'\to Y'$ are {\em homeomorphic} if there are homeomorphisms $h_X:X\to X'$ and $h_Y:Y\to Y'$ such that $f'\circ h_X=h_Y\circ f$.

The following spectral characterizations of skeletal maps was proved in \cite{BKM}.

\begin{theorem}\label{skel-char-comp} For a map $f:X\to Y$ between compact Hausdorff spaces the following conditions are equivalent:
\begin{enumerate}
\item $f$ is skeletal.
\item $f$ is homeomorphic to the limit map $\invlim f_\alpha:\invlim \mathcal S_X\to\mathcal S_Y$ of a skeletal morphism $\{f_\alpha\}:\mathcal S_X\to\mathcal S_Y$ between two $\w$-spectra $\mathcal S_X=\{X_\alpha,p_\alpha^\beta,\Sigma\}$ and $\mathcal S_Y=\{Y_\alpha,\pi_\alpha^\beta,\Sigma\}$ with surjective limit projections.
\item $f$ is homeomorphic to the limit map $\invlim f_\alpha:\invlim \mathcal S_X\to\mathcal S_Y$ of a morphism $\{f_\alpha\}:\mathcal S_X\to\mathcal S_Y$ with skeletal limit squares between two $\w$-spectra $\mathcal S_X=\{X_\alpha,p_\alpha^\beta,\Sigma\}$ and $\mathcal S_Y=\{Y_\alpha,\pi_\alpha^\beta,\Sigma\}$ with surjective limit projections.
\end{enumerate}
\end{theorem}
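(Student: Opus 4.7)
The plan is to establish the cycle $(1)\Rightarrow(2)\Rightarrow(3)\Rightarrow(1)$, using Theorem~\ref{l1c} and Lemma~\ref{l1b} to reduce skeletality questions on each level of the spectrum to the metrizable case, and invoking the \v S\v cepin spectral theorem to manufacture the required $\omega$-spectral representations of $f$.

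I would begin with the easy step $(3)\Rightarrow(1)$. The sets $p_\alpha^{-1}(U)$, where $\alpha\in\Sigma$ and $U$ is open in $X_\alpha$, form a base of the topology of $\lim\mathcal S_X$. Given such a non-empty basic open set, I pick $x\in U$; skeletality of the $\alpha$-th limit square at $x$ produces a non-empty open $V\subset Y_\alpha$ with $\pi_\alpha^{-1}(V)\subset\lim f_\alpha(p_\alpha^{-1}(U))$, and since $\pi_\alpha$ is surjective, $\pi_\alpha^{-1}(V)$ is non-empty and open. Thus $\lim f_\alpha$ sends every non-empty open set to a set with non-empty interior, hence is skeletal.

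For $(2)\Rightarrow(3)$, each $f_\alpha:X_\alpha\to Y_\alpha$ is skeletal between metrizable compacta, so by Theorem~\ref{l1c} it is open on a dense $G_\delta$-subset $D_\alpha\subset X_\alpha$. By Lemma~\ref{l1b}, to conclude skeletality of the $\alpha$-th limit square it suffices to verify that the square is open at a dense $G_\delta$-subset of $X_\alpha$. For $x\in D_\alpha$, openness of $f_\alpha$ at $x$ combined with surjectivity of $p_\alpha$ and $\pi_\alpha$ enables one to lift a neighborhood of $f_\alpha(x)$ to a compatible neighborhood in $\lim\mathcal S_X$, yielding the square-openness condition at $x$. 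A mild cofinal refinement (using $\omega$-continuity of the spectra and Remark~\ref{rem1a}(2)) may be needed to guarantee the required bicommutativity along a dense set.

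The crux is $(1)\Rightarrow(2)$. Starting from the skeletal map $f$, I would invoke the \v S\v cepin spectral theorem to represent $f$ as the limit of a morphism $\{f_\alpha\}:\mathcal S_X\to\mathcal S_Y$ between $\omega$-spectra of metrizable compacta with surjective limit projections. A priori, the individual $f_\alpha$ need not be skeletal; the skeletality of $f$ must be transported down to each level. Theorem~\ref{l1c} reformulates the needed level-skeletality as openness on a dense $G_\delta$, and the $\omega$-completeness of the index set together with the $\omega$-continuity of the spectrum allow a diagonal cofinal refinement along countable chains, producing an equivalent $\omega$-spectrum on which the restricted morphism is skeletal at every level. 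The main obstacle I expect is this simultaneous refinement: one must coordinate the choice of cofinal indices between the $X$-side and the $Y$-side while maintaining surjectivity of the limit projections, and this combinatorial step is what makes the proof technical in spite of the conceptually clean statement.
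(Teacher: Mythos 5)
First, a point of order: this paper does not prove Theorem~\ref{skel-char-comp} at all. It is stated with the sentence ``The following spectral characterizations of skeletal maps was proved in \cite{BKM}'' and is imported from that companion paper, so there is no in-paper argument to compare yours against. Judging your outline on its own merits: the implication $(3)\Rightarrow(1)$ is correct and essentially complete --- the sets $p_\alpha^{-1}(U)$ do form a base, and producing a non-empty open $\pi_\alpha^{-1}(V)$ inside the image of each basic open set does give skeletality of the (closed) limit map. That much matches what one expects the cited Proposition~3.1 of \cite{BKM} to say.

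The two remaining implications, however, are where all the content lives, and both are deferred rather than proved. In $(2)\Rightarrow(3)$ you assert that openness of $f_\alpha$ at $x\in D_\alpha$ ``combined with surjectivity of $p_\alpha$ and $\pi_\alpha$'' yields openness of the limit square at $x$. This is false as stated: square-openness at $x$ requires, in addition to $V\subset f_\alpha(U)$, the fiberwise inclusion $\pi_\alpha^{-1}(V)\subset(\lim f_\alpha)(p_\alpha^{-1}(U))$, and by Remark~\ref{rem1a}(2) the ingredient that converts pointwise openness of $f_\alpha$ into this is bicommutativity of the limit square at $x$ --- which does not follow from surjectivity of the limit projections. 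Your parenthetical ``mild cofinal refinement may be needed to guarantee the required bicommutativity along a dense set'' is precisely the theorem's hard kernel, not a mild adjustment; you would need to exhibit, for a cofinal $\omega$-closed set of indices, a dense set of points at which the limit square is bicommutative, and no mechanism for doing so is given. (Note also that $(2)\Rightarrow(3)$ would follow for free from $(2)\Rightarrow(1)$ and $(1)\Rightarrow(3)$, since the spectra in $(3)$ need not be the ones given in $(2)$; insisting on an in-place refinement makes your task strictly harder.) Likewise, in $(1)\Rightarrow(2)$ the entire construction --- detecting the skeletality of $f$ at a countable level, e.g.\ by locating, for each member $U$ of a countable base of $X_{\alpha_n}$, an index $\alpha_{n+1}$ and a non-empty open $V\subset Y_{\alpha_{n+1}}$ with $\pi_{\alpha_{n+1}}^{-1}(V)\subset f\bigl(p_{\alpha_n}^{-1}(\overline{U})\bigr)$, and then closing off under a diagonal argument --- is compressed into one sentence. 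As it stands the proposal is a correct table of contents for the proof in \cite{BKM}, but the steps that make the theorem true are missing.
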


\section{Some properties of densely open squares}

In this section we assume that $\mathcal D$ is a commutative square
$$\xymatrix{
\tilde X\ar[r]^{\tilde f}\ar[d]_{p_X}&\tilde Y\ar[d]^{p_Y}\\
X\ar[r]_{f}&Y}
$$consisting of surjective maps between compact spaces.

By $$D_f=\{y\in Y:|f^{-1}(y)|=1\}\mbox{ and }D^f=\{x\in X:|f^{-1}(f(x))|=1\}$$we denote the {\em lower and upper degeneracy sets} of the map $f:X\to Y$, respectively.

\begin{lemma}\label{l2} The square $\mathcal D$ is open at the upper degeneracy set $D^f\subset X$ of $f$.
\end{lemma}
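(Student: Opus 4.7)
The plan is to prove this directly from the definition of open-at-a-point by a standard compactness argument, using the singleton-fiber condition that defines $D^f$.

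Fix $x\in D^f$ and an arbitrary open neighborhood $U\subset X$ of $x$. The core observation is that because $f^{-1}(f(x))=\{x\}\subset U$, the set $f(X\setminus U)$ is a closed subset of $Y$ not containing $f(x)$ (any point of $X\setminus U$ mapping to $f(x)$ would have to equal $x$). So I would set $V:=Y\setminus f(X\setminus U)$; this is an open neighborhood of $f(x)$ satisfying $f^{-1}(V)\subset U$. Surjectivity of $f$ then immediately gives $V\subset f(U)$: every $y\in V$ has a preimage under $f$, which must lie in $U$.

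To verify the second condition $p_Y^{-1}(V)\subset \tilde f(p_X^{-1}(U))$, I would take any $\tilde y\in p_Y^{-1}(V)$. Since $\tilde f$ is surjective, write $\tilde y=\tilde f(\tilde x)$ for some $\tilde x\in\tilde X$. The commutativity relation $p_Y\circ\tilde f=f\circ p_X$ then gives
\[
f(p_X(\tilde x))=p_Y(\tilde f(\tilde x))=p_Y(\tilde y)\in V,
\]
so $p_X(\tilde x)\in f^{-1}(V)\subset U$, i.e.\ $\tilde x\in p_X^{-1}(U)$, and hence $\tilde y=\tilde f(\tilde x)\in \tilde f(p_X^{-1}(U))$, as required.

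There is really no serious obstacle here; the statement is essentially a tautological consequence of the definition of $D^f$ together with the compactness of $X$ (ensuring $f(X\setminus U)$ is closed so that $V$ is open) and the surjectivity of $\tilde f$. The only subtle point to be careful about is that the definition of ``open at $x$'' requires $V$ to be a neighborhood of $f(x)$ with \emph{both} $V\subset f(U)$ and $p_Y^{-1}(V)\subset\tilde f(p_X^{-1}(U))$, so I would emphasize that the single choice $V=Y\setminus f(X\setminus U)$ handles both conditions simultaneously, with the first following from surjectivity of $f$ and the second from surjectivity of $\tilde f$ combined with commutativity of the square.
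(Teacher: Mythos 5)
Your proof is correct and follows exactly the same route as the paper's: the single choice $V=Y\setminus f(X\setminus U)$, with $f^{-1}(V)\subset U$ giving $V\subset f(U)$ via surjectivity of $f$, and the second inclusion via surjectivity of $\tilde f$ plus commutativity. No discrepancies to report.
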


\begin{proof} Given an open neighborhood $U\subset X$ of a point $x\in D^f$, observe that the set $V=Y\setminus f(X\setminus U)$ is an open neighborhood of $f(x)$ such that $f^{-1}(V)\subset U$. Applying to this inclusion the surjective map $f$, we get $V\subset f(U)$. To see that $p_Y^{-1}(V)\subset \tilde f(p_X^{-1}(U))$, fix any point $\tilde y\in p_Y^{-1}(V)$ and using the surjectivity of the map $\tilde f$, find a point $\tilde x\in \tilde X$ with $\tilde f(\tilde x)=\tilde  y$. It follows that $f\circ p_X(\tilde x)=p_Y\circ \tilde f(\tilde x)\in V$ and hence $p_X(\tilde x)\in f^{-1}(V)\subset U$. Then $\tilde x\in p_X^{-1}(U)$ and $\tilde y=\tilde f(\tilde x)\in \tilde f(p_X^{-1}(U))$.
\end{proof}

\begin{lemma}\label{l3a} Assume that the square $\mathcal D$ is open at a point $a\in X$ and the space $X$ is first countable at $a$. Then
there is a closed subset $Z\subset X$ such that $a\in D^{f|Z}$, $f(Z)=Y$ and $\tilde f(p_X^{-1}(Z))=\tilde Y$.
\end{lemma}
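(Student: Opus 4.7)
The plan is to build $Z$ as $\{a\}$ together with a countable family of closed ``annular'' pieces, each disjoint from $f^{-1}(f(a))$ and arranged so that their closures accumulate only at $a$. First I would fix, using that $X$ is compact Hausdorff and first countable at $a$, a decreasing base of open neighborhoods $(W_n)_{n\ge 0}$ of $a$ with $W_0=X$, $\overline{W_{n+1}}\subset W_n$ and $\bigcap_n\overline{W_n}=\{a\}$. Applying the openness of $\mathcal D$ at $a$ to each $W_n$ and intersecting the resulting witnesses, I obtain open neighborhoods $V_n\subset Y$ of $f(a)$ with $V_n\subset f(W_n)$, $p_Y^{-1}(V_n)\subset\tilde f(p_X^{-1}(W_n))$, $V_{n+1}\subset V_n$, and $V_0:=Y$. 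The candidate is
$$
Z:=\{a\}\cup\bigcup_{n\ge 0}Z_n,\qquad Z_n:=\overline{W_n\setminus W_{n+1}}\cap f^{-1}(Y\setminus V_{n+1}).
$$

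Closedness of $Z$ will follow from local finiteness of $\{\overline{W_n\setminus W_{n+1}}\}_n$ outside $a$: given $x\ne a$, pick $N$ with $x\notin\overline{W_N}$, and observe that $X\setminus\overline{W_N}$ is a neighborhood of $x$ meeting $Z_n$ only for $n<N$, so locally $\bigcup_n Z_n$ is a finite union of closed sets. The condition $a\in D^{f|Z}$ is immediate, since $f(Z_n)\subset Y\setminus V_{n+1}$ while $f(a)\in V_{n+1}$.

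For $f(Z)=Y$ I would first show $\bigcap_n V_n\subset\{f(a)\}$: if some $y$ were in every $V_n$, the preimages in $W_n$ supplied by $V_n\subset f(W_n)$ would accumulate at $a$ and give $y=f(a)$. So for $y\ne f(a)$, let $M\ge 1$ be minimal with $y\notin V_M$ and $K$ be minimal with $f^{-1}(y)\cap W_K=\emptyset$ (existing by compactness). The inclusion $V_K\subset f(W_K)$ rules out $K<M$, so $K\ge M$, hence $y\in Y\setminus V_K$ and any $x_0\in f^{-1}(y)\cap W_{K-1}$ automatically lies in $W_{K-1}\setminus W_K$, placing it in $Z_{K-1}$.

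The parallel argument for $\tilde f(p_X^{-1}(Z))=\tilde Y$ splits according to $y:=p_Y(\tilde y)$. When $y=f(a)$, Remark~\ref{rem1a}(3) (openness at $a$ forces bicommutativity at $a$) gives $\tilde y\in p_Y^{-1}(f(a))=\tilde f(p_X^{-1}(a))$. When $y\ne f(a)$, one takes the same $M$ and uses $\tilde y\in p_Y^{-1}(V_{M-1})\subset\tilde f(p_X^{-1}(W_{M-1}))$ to produce $\tilde x\in\tilde f^{-1}(\tilde y)$ with $p_X(\tilde x)\in W_{M-1}\setminus\{a\}$; this point lands in some annulus $W_n\setminus W_{n+1}$ with $n\ge M-1$, forcing $p_X(\tilde x)\in Z_n$. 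The main technical point is to coordinate the two shrinking sequences so that the annuli are simultaneously locally finite off $a$ (which is why $\overline{W_{n+1}}\subset W_n$ is imposed) and large enough to make the index inequality $K\ge M$ go through; the lifted condition $p_Y^{-1}(V_n)\subset\tilde f(p_X^{-1}(W_n))$ is exactly what lets the same strategy port from $f$ to $\tilde f$.
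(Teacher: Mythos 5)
Your proof is correct and follows essentially the same strategy as the paper's: both build $Z$ as $\{a\}$ together with a countable union of closed ``annular'' pieces, each cut away from $f^{-1}(V_{n+1})$ for a shrinking sequence of witnesses $V_n$ supplied by the openness of $\mathcal D$ at $a$, and both split the surjectivity check into the fiber over $f(a)$ (handled by compactness/accumulation, which is what your appeal to Remark~\ref{rem1a}(3) amounts to) and the remaining fibers (handled by locating the right index $n$). The only cosmetic differences are that you work directly with the base $(W_n)$ instead of interleaving an auxiliary sequence $U_n\subset W_n\cap f^{-1}(V_{n-1})$, and you prove $f(Z)=Y$ by a separate index argument where the paper simply applies the surjection $p_Y$ to the already-established equality $\tilde f(p_X^{-1}(Z))=\tilde Y$.
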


\begin{proof} Being first countable at $a$, the space $X$ has a countable neighborhood base $(W_n)_{n\in\w}$ at $a$ such that $W_{n+1}\subset W_n\subset W_0=X$ for all $n\in\w$.
Let $U_0=X$ and $V_0=Y$. Using the fact that the square $\mathcal D$ is open at the point $a$, by induction on $n$, we can construct a sequence $(U_n)_{n=1}^\infty$ of open neighborhoods of $a$ in $X$ and a sequence $(V_n)_{n=1}^\infty$ of open neighborhoods of $f(a)$ in $Y$ such that
\begin{itemize}
\item $U_n\subset W_n\cap U_{n-1}\cap f^{-1}(V_{n-1})$,
\item $V_n\subset f(U_n)\cap V_{n-1}$ and $p_Y^{-1}(V_n)\subset \tilde f(p_X^{-1}(U_n))$
\end{itemize}
for every $n\in\IN$.

We claim that the set $$Z=\{a\}\cup\bigcup_{n\in\w}\overline U_n\setminus f^{-1}( V_{n+1})$$ is closed in $X$ and has the required properties: $a\in D^{f|Z}$, $f(Z)=Y$ and $\tilde f(p_X^{-1}(Z))=\tilde Y$.

The definition of the set $Z$ implies that it is closed in $X$ and $a\in D^{f|Z}$. To show that
 $\tilde f(p_X^{-1}(Z))=\tilde Y$, fix any point $\tilde y\in\tilde Y$. We need to find a point $\tilde x \in p_X^{-1}(Z)$ such that $\tilde f(\tilde x)=\tilde y$.

For this we consider separately two cases.

1) The image $y=p_Y(\tilde y)$ of $\tilde y$ coincides with $f(a)$. In this case for every $n\in\w$ we get $\tilde y\in p_Y^{-1}(V_n)\subset \tilde f(p_X^{-1}(U_n))$ and hence there is a point $\tilde x_n\in p^{-1}_X(U_n)$ such that $\tilde y=\tilde f(\tilde x_n)$. By the compactness of $\tilde X$, the sequence $(\tilde x_n)_{n\in\w}$ has an accumulation point $\tilde x\in p^{-1}_X(a)\subset p_X^{-1}(Z)$. The  continuity of the map $\tilde f$ guarantees that $\tilde f(\tilde x)=\tilde y$.

2) The point $y=p_Y(\tilde y)$ is not equal to $f(a)$. Since $V_0=Y$ and $\bigcap_{n\in\w}f(U_n)=\bigcap_{n\in\w}V_n=\{f(a)\}$, there is a unique number $n\in\w$ such that $y\in V_n\setminus V_{n+1}$. Then $\tilde y\in p_Y^{-1}(V_n)\subset \tilde f(p_X^{-1}(U_n))$ and hence there is a point $\tilde x\in p_X^{-1}(U_n)$ such that $\tilde f(\tilde x)=\tilde y$. Consider the image $x=p_X(\tilde x)\in U_n$ and observe that $f(x)=f\circ p_X(\tilde x)=p_Y\circ\tilde f(\tilde x)=p_Y(\tilde y)=y\notin V_{n+1}$. Consequently, $x\in U_n\setminus f^{-1}(V_{n+1})\subset Z$ and $\tilde x\in p_X^{-1}(x)\subset p^{-1}_X(Z)$.

Therefore   $\tilde f(p_X^{-1}(Z))=\tilde Y$. Applying to this equality the surjective map $p_Y$, we get
$$f(Z)=f\circ p_X(p_X^{-1}(Z))=p_Y\circ \tilde f(p_X^{-1}(Z))=p_Y(\tilde Y)=Y.$$
\end{proof}

\begin{lemma}\label{l3} If the square $\mathcal D$ is open at a
finite subset $A\subset X$, the space $X$ is first countable at
each point $x\in A$, and the restriction $f|A$ is injective, then
there is a closed subset $Z\subset X$ such that $f(Z)=Y$, $\tilde f(p^{-1}_X(Z))=\tilde Y$, and $A\subset D^{f|Z}$.
\end{lemma}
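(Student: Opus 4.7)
The plan is to reduce to the single-point case (Lemma~\ref{l3a}) by localizing the construction around each point of $A$, exploiting the injectivity of $f|A$. Enumerate $A=\{a_1,\dots,a_k\}$ and set $b_i=f(a_i)$; since $f|A$ is injective, the points $b_i$ are pairwise distinct in the compact Hausdorff (hence regular) space $Y$, so I can pick open neighborhoods $V_i\ni b_i$ with pairwise disjoint closures $\overline{V_i}$.

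For each $i$, restrict every map of $\mathcal D$ to the preimage of $\overline{V_i}$, obtaining a commutative square $\mathcal D_i$
$$\xymatrix{p_X^{-1}(f^{-1}(\overline{V_i}))\ar[r]^-{\tilde f_i}\ar[d] & p_Y^{-1}(\overline{V_i})\ar[d]\\ f^{-1}(\overline{V_i})\ar[r]_-{f_i} & \overline{V_i}}$$
of surjective maps between compact spaces. The key technical observation is that $\mathcal D_i$ is still open at $a_i$: any open neighborhood $U$ of $a_i$ in $f^{-1}(\overline{V_i})$ small enough to lie in $f^{-1}(V_i)$ is already open in $X$, so the open-at-$a_i$ data for $\mathcal D$ produce an open $V\subset Y$ with $V\subset f(U)$ and $p_Y^{-1}(V)\subset\tilde f(p_X^{-1}(U))$, and $V\cap V_i$ is the required open neighborhood of $b_i$ inside $\overline{V_i}$. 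Since $f^{-1}(\overline{V_i})$ inherits first countability at $a_i$ from $X$, Lemma~\ref{l3a} applied to $\mathcal D_i$ yields a closed set $Z_i\subset f^{-1}(\overline{V_i})$ with $a_i\in D^{f|Z_i}$, $f(Z_i)=\overline{V_i}$ and $\tilde f(p_X^{-1}(Z_i))=p_Y^{-1}(\overline{V_i})$.

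I then set
$$Z \;=\; \bigcup_{i=1}^{k}Z_i \;\cup\; f^{-1}\bigl(Y\setminus\bigcup_{i=1}^{k} V_i\bigr),$$
which is closed as a finite union of closed subsets of $X$. The equalities $f(Z)=Y$ and $\tilde f(p_X^{-1}(Z))=\tilde Y$ follow at once by splitting $Y=\bigcup_i\overline{V_i}\cup(Y\setminus\bigcup_i V_i)$ and invoking surjectivity of $p_Y$ and $\tilde f$. For the inclusion $A\subset D^{f|Z}$, pairwise disjointness of the $\overline{V_i}$ is decisive: for a given $a_j$ the residual piece $f^{-1}(Y\setminus\bigcup_i V_i)$ misses $f^{-1}(b_j)$ because $b_j\in V_j$; each $Z_i$ with $i\neq j$ misses $f^{-1}(b_j)$ because $Z_i\subset f^{-1}(\overline{V_i})$ and $b_j\notin\overline{V_i}$; and $Z_j\cap f^{-1}(b_j)=\{a_j\}$ by the conclusion of Lemma~\ref{l3a} applied to $\mathcal D_j$. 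The only genuinely delicate step is verifying the openness of the restricted square $\mathcal D_i$ at $a_i$; once that is in hand, the rest is routine bookkeeping with disjoint closed neighborhoods.
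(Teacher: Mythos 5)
Your proof is correct and takes essentially the same route as the paper's: both reduce to Lemma~\ref{l3a} by choosing pairwise disjoint closed neighborhoods of the points $f(a)$, $a\in A$ (possible by injectivity of $f|A$), and then glue the resulting local pieces together with $f^{-1}\bigl(Y\setminus\bigcup_{i}V_i\bigr)$, yielding structurally the same set $Z$. The only difference is the order of operations: the paper applies Lemma~\ref{l3a} to the full square $\mathcal D$ to obtain global sets $Z_a$ with $f(Z_a)=Y$ and afterwards intersects them with $f^{-1}(\overline{W}_a)$, whereas you restrict the square to $f^{-1}(\overline{V_i})$ first---at the cost of the extra (correctly verified) observation that the restricted square remains open at $a_i$---and only then invoke Lemma~\ref{l3a}.
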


\begin{proof} By Lemma~\ref{l3a}, for every point $a\in A$ there is a closed subset $Z_a\subset X$ such that  $a\in D^{f|Z_a}$, $f(Z_a)=Y$, and $\tilde f(p^{-1}_X(Z_a))=\tilde Y$. Since $f|A$ is  injective, for each point $a\in A$ we can find an open neighborhood $W_a\subset Y$ of $f(a)$ in $Y$ such that the closures $\overline{W}_a$, $a\in A$, are pairwise distinct.
Let $B=Y\setminus\bigcup_{a\in A}W_a$ and $$Z=f^{-1}(B)\cup\bigcup_{a\in A}f^{-1}(\overline{W}_a)\cap Z_a.$$It is easy to check that the set $Z$ is closed and has the required properties: $A\subset D^{f|Z}$, $f(Z)=Y$ and $\tilde f(p_X^{-1}(Z))=\tilde Y$.
\end{proof}

The proof of the following simple lemma is left to the reader.

\begin{lemma}\label{l4a} A map $f:X\to Y$ is open (resp. skeletal) at a point $x\in X$
provided for some subset $Z\subset X$ that contains the point
$x$ the map $f|Z:Z\to Y$ is open (resp. skeletal) at the point $x$.
\end{lemma}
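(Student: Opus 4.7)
The plan is to unpack the definitions and observe that the restriction $f|Z$ can only see fewer neighborhoods of $x$ than $f$ does, yet still detects the relevant openness/skeletality property. So any neighborhood $U$ of $x$ in $X$ yields a (smaller) neighborhood $U\cap Z$ of $x$ in $Z$, and the hypothesis on $f|Z$ applied to $U\cap Z$ will give information that pushes up to $U$ via the inclusion $f(U\cap Z)\subset f(U)$.

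In detail, I would fix an open neighborhood $U\subset X$ of $x$ and form the relatively open set $U\cap Z\subset Z$, which contains $x$. For the open case, the assumption that $f|Z$ is open at $x$ means that $(f|Z)(U\cap Z)=f(U\cap Z)$ is a neighborhood of $f(x)$ in $Y$; since $f(U\cap Z)\subset f(U)$, the set $f(U)$ is then a neighborhood of $f(x)$ as well, establishing openness of $f$ at $x$.

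For the skeletal case, the same set-up applies: $f|Z$ being skeletal at $x$ gives that $\cl_Y(f(U\cap Z))$ has non-empty interior in $Y$, and from $f(U\cap Z)\subset f(U)$ one concludes $\cl_Y(f(U\cap Z))\subset \cl_Y(f(U))$, so the latter also has non-empty interior. Since $U$ was an arbitrary neighborhood of $x$ in $X$, the map $f$ is skeletal at $x$.

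There is no real obstacle here beyond bookkeeping with the subspace topology; the only point worth noting is that the hypothesis is formulated with respect to neighborhoods in $Z$, so we implicitly use that $U\cap Z$ is a neighborhood of $x$ in the subspace $Z$ whenever $U$ is a neighborhood of $x$ in $X$, which is immediate from the definition of the subspace topology.
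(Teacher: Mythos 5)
Your argument is correct and is exactly the routine verification the authors had in mind: the paper explicitly leaves the proof of this lemma to the reader, and the intended argument is precisely to pass from a neighborhood $U$ of $x$ in $X$ to the neighborhood $U\cap Z$ of $x$ in $Z$ and use the monotonicity of images and closures. Nothing is missing.
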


The following lemma is a ``square'' counterpart of Lemma~\ref{l4a}.

\begin{lemma}\label{l4} The square $\mathcal D$ is open (resp. skeletal) at a point $x\in X$
provided for some subset $Z\subset X$ that contains the point
$x$, and its preimage $\tilde Z=p_X^{-1}(Z)$ the square
$$\xymatrix{
\tilde Z\ar[r]^{\tilde f|\tilde Z}\ar[d]_{p_X|\tilde Z}&\tilde Y\ar[d]^{p_Y}\\
Z\ar[r]_{f|Z}&Y}
$$
is open (resp. skeletal) at the point $x$.
\end{lemma}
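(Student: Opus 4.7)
The plan is to treat both assertions (openness and skeletality at $x$) in parallel, since the two definitions differ only in whether the output set $V \subset Y$ must be a neighborhood of $f(x)$ or merely a non-empty open set. In both cases the definition quantifies over neighborhoods $U$ of $x$ in the source, and asserts the existence of a $V \subset Y$ satisfying two inclusions: $V \subset f(U)$ and $p_Y^{-1}(V) \subset \tilde f(p_X^{-1}(U))$. So the goal is to produce such a $V$ for the original square starting from one produced by the restricted square.

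First I would fix an arbitrary open neighborhood $U \subset X$ of $x$, and pass to its trace $U \cap Z$, which is an open neighborhood of $x$ in the subspace topology on $Z$. Applying the hypothesis that the restricted square is open (resp. skeletal) at $x$ to the neighborhood $U \cap Z$, one obtains a set $V \subset Y$ (which is either a neighborhood of $f(x)$ or a non-empty open set, according to the case) such that
$$V \subset (f|Z)(U \cap Z)\qquad\text{and}\qquad p_Y^{-1}(V) \subset (\tilde f|\tilde Z)\bigl((p_X|\tilde Z)^{-1}(U \cap Z)\bigr).$$

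The last step is a routine unrestriction. Observe that $(f|Z)(U\cap Z) = f(U \cap Z) \subset f(U)$, so the first inclusion upgrades to $V \subset f(U)$. For the second inclusion, note that $(p_X|\tilde Z)^{-1}(U \cap Z) = p_X^{-1}(U \cap Z)\cap \tilde Z \subset p_X^{-1}(U)$, and that $\tilde f|\tilde Z$ agrees with $\tilde f$ on $\tilde Z$; hence $p_Y^{-1}(V) \subset \tilde f(p_X^{-1}(U))$. Thus the same $V$ witnesses the openness (resp. skeletality) of the original square $\mathcal D$ at $x$.

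There is no real obstacle; the statement is essentially a monotonicity lemma expressing that enlarging the source from $Z$ to $X$ (and correspondingly $\tilde Z$ to $\tilde X$) can only make the defining inclusions easier to satisfy, while the target side $Y,\tilde Y$ is unchanged. The only point deserving mild care is to verify that when restricting to $Z$ one is working in the subspace topology, so that $U \cap Z$ is indeed an admissible neighborhood of $x$ in $Z$; after that the proof reduces to the two inclusion chases above.
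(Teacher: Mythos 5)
Your proof is correct and is exactly the routine inclusion-chasing argument the paper intends: the authors state this lemma without proof, presenting it as the ``square'' counterpart of Lemma~\ref{l4a}, whose proof is explicitly left to the reader. The key identity $(p_X|\tilde Z)^{-1}(U\cap Z)=p_X^{-1}(U)\cap\tilde Z$ (using $\tilde Z=p_X^{-1}(Z)$) and the monotonicity of images under $f$ and $\tilde f$ are precisely what is needed, and you have verified both.
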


\section{Preliminaries on functors}

In this section we prove some auxiliary results on functors in the category $\Comp$ of compacta. From now on we assume that  $F:\Comp\to\Comp$ is a monomorphic epimorphic continuous functor.

For two compact Hausdorff spaces $X$ and $Y$ by $C(X,Y)$ we denote the space of continuous functions $f:X\to Y$, endowed with the compact-open topology. A proof of the following fact due to \v S\v cepin \cite[\S3.2]{Sh} can be found in \cite[2.2.3]{TZ}.

\begin{lemma}\label{f-l1} For any compacta $X,Y$ the map$$F:C(X,Y)\to C(FX,FY),\;\;F:f\mapsto Ff,$$ is continuous.
\end{lemma}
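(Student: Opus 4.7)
The plan is to encode each continuous map $f:X\to Y$ by its graph $\Gamma_f=\{(x,f(x)):x\in X\}\subset X\times Y$ and reduce the continuity of the assignment $F:C(X,Y)\to C(FX,FY)$ to the continuity of the induced hyperspace map, which is a standard consequence of $F$ being monomorphic and continuous.

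First I would recall the graph embedding $\gamma:C(X,Y)\hookrightarrow \exp(X\times Y)$, $f\mapsto \Gamma_f$. For compact $X,Y$ the compact-open topology on $C(X,Y)$ coincides with the subspace Vietoris topology inherited from $\exp(X\times Y)$, and the inverse $\gamma^{-1}$ on the set of graphs is continuous since a map can be recovered from its graph by $f(x)=\pi_Y(\Gamma_f\cap\pi_X^{-1}(x))$.

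Next, using the monomorphicity of $F$, I identify $F\Gamma_f$ with a closed subset of $F(X\times Y)$ via the embedding $Fi_{\Gamma_f}$. The projection $\pi_X|\Gamma_f:\Gamma_f\to X$ is a homeomorphism with inverse $x\mapsto(x,f(x))$; hence $F(\pi_X|\Gamma_f):F\Gamma_f\to FX$ is a homeomorphism as well, and the identity
$$F\pi_Y\circ Fi_{\Gamma_f}\circ\bigl(F(\pi_X|\Gamma_f)\bigr)^{-1}=Ff$$
shows that the natural map $\psi=(F\pi_X,F\pi_Y):F(X\times Y)\to FX\times FY$ carries $F\Gamma_f$ onto the graph $\Gamma_{Ff}$ of $Ff$. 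Consequently $F$ factors as
$$C(X,Y)\xrightarrow{\gamma}\exp(X\times Y)\xrightarrow{\bar F}\exp(F(X\times Y))\xrightarrow{\exp(\psi)}\exp(FX\times FY)\xrightarrow{\gamma^{-1}}C(FX,FY),$$
where $\bar F(K):=Fi_K(FK)$ is the hyperspace extension of $F$. The first, third, and last arrows are continuous for elementary reasons: $\gamma$ and $\gamma^{-1}$ are topological embeddings on the relevant subsets, and $\exp(\psi)$ is continuous because it is the Vietoris-functorial image of the continuous $\psi$.

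The main obstacle, therefore, is the continuity of the hyperspace extension $\bar F:\exp(X\times Y)\to \exp(F(X\times Y))$. This is a classical property of monomorphic continuous functors on $\Comp$ due to \v S\v cepin (cf.\ \cite[\S3.2]{Sh} and \cite[2.2.3]{TZ}); it follows from the fact that $F$ preserves limits of $\omega$-spectra together with its monomorphicity, which allow the assignment $K\mapsto FK$ to be analysed via the spectral representation of closed subsets of $X\times Y$ as limits of decreasing nets of closed neighborhoods. Once the continuity of $\bar F$ is granted, composition of the four continuous arrows displayed above yields the continuity of $F:C(X,Y)\to C(FX,FY)$.
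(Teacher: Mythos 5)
The paper does not actually prove this lemma---it is quoted with a pointer to \v S\v cepin \cite[\S3.2]{Sh} and to \cite[2.2.3]{TZ}---so there is no internal argument to compare yours against; I can only assess your proposal on its own terms. The graph-factorization itself is sound: for compact $X,Y$ the compact-open topology does agree with the Vietoris topology on graphs, the map $\psi=(F\pi_X,F\pi_Y)$ does carry $Fi_{\Gamma_f}(F\Gamma_f)$ onto $\Gamma_{Ff}$ (since $F(\pi_X|\Gamma_f)$ is a homeomorphism and $F(\pi_Y|\Gamma_f)=Ff\circ F(\pi_X|\Gamma_f)$ by functoriality), and the arrows $\gamma$, $\exp(\psi)$, $\gamma^{-1}$ are continuous for the elementary reasons you give. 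So the reduction correctly concentrates all the content in the continuity of $\bar F:\exp(X\times Y)\to\exp(F(X\times Y))$.

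That last step is where the proof has a genuine gap. The justification you sketch---analysing $K\mapsto FK$ through decreasing nets of closed neighborhoods of $K$ and the preservation of their limits---establishes only the \emph{upper} Vietoris semicontinuity of $\bar F$ (indeed, $\bigcap_V F\overline V=FK$ follows from monomorphicity plus continuity, and compactness then gives $FK'\subset U$ for $K'$ in a small neighborhood). It says nothing about \emph{lower} semicontinuity: given $a\in FK\cap U$ and $K'$ Vietoris-close to $K$, one must produce a point of $FK'$ in $U$. The standard way to do this is to replace $a$ by a nearby $b$ with finite support $B\subset K$, choose $g:B\to K'$ close to the inclusion $i_B$, and use that $Fg(b)$ is close to $b$ --- but the continuity of $g\mapsto Fg(b)$ is exactly the assertion of Lemma~\ref{f-l1} for the finite domain $B$, and that special case carries essentially the full difficulty of the lemma (the same compactified-sequence argument proves both). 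Your citation makes the circularity visible: the reference you give for the continuity of $\bar F$ is \cite[2.2.3]{TZ}, which is the statement of Lemma~\ref{f-l1} itself. To close the gap you would need either an independent proof of the lower semicontinuity of $\bar F$ for mec functors, or a direct proof of the lemma, e.g.: reduce to $Y$ metrizable via the spectral representation, take $f_n\to f$ uniformly, assemble the homotopy-like map $G:X\times T\to Y$ over the convergent sequence $T=\{0\}\cup\{1/n:n\in\IN\}$, and use the retraction $r:X\times T\to X\times\{0\}$ together with $\bigcap_nF(X\times(\{0\}\cup\{1/m:m\ge n\}))=F(X\times\{0\})$ to show $Fj_{1/n}\to Fj_0$ uniformly, whence $Ff_n=FG\circ Fj_{1/n}\to FG\circ Fj_0=Ff$.
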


Next, we discuss the notion of support.
Let $X$ be a compact Hausdorff space, and $a\in FX$. We say that a point $a\in FX$ has {\em finite support} if $a\in FA$ for some finite subspace $A\subset X$. In this case we define $\supp(a)$ as the intersection $$\supp(a)=\cap\{A:a\in FA,\;A\subset X\mbox{ is finite}\}.$$
We shall often use the following fact proved in \cite{BMZ}:

\begin{lemma}\label{l:BMZ} Let $a\in FX$ be an element with finite support. If $\supp(a)\ne\emptyset$, then $a\in F(\supp(a))$. If $\supp(a)=\emptyset$, then $a\in FA$ for any non-empty closed subspace $A\subset X$.
\end{lemma}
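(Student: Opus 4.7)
The plan is to reduce both parts of the lemma to a single \emph{intersection property} for finite subsets: if $A,B\subset X$ are finite with $C=A\cap B\ne\emptyset$ and $a\in FA\cap FB$ (as subsets of $FX$ under the monomorphic identification), then $a\in FC$. I would prove this by working inside the finite ambient space $Y=A\cup B$ and fixing a basepoint $c\in C$. Define retractions $q_A,q_B\colon Y\to Y$, where $q_A$ is the identity on $A$ and collapses $B\setminus A$ to $c$, and symmetrically for $q_B$. Since $q_A|_A=\id_A$ and $a$ lies in $FA\subset FY$, functoriality forces $F(q_A)(a)=a$, and likewise $F(q_B)(a)=a$. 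A direct case analysis shows that $q_A\circ q_B$ is the identity on $C$ and sends every point of $Y\setminus C$ to $c$; in particular it factors through $C$, so $a=F(q_A\circ q_B)(a)$ lies in the image of $FC\hookrightarrow FY$, proving the claim.

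With the intersection property in hand, Case~1 is bookkeeping. Fix any witness $A_0\in\mathcal F:=\{A\subset X:A\text{ finite},\ a\in FA\}$ of finite support. Since $\supp(a)=\bigcap\mathcal F\subset A_0$ and $A_0$ is finite, the intersection $\supp(a)$ is realised as a finite intersection $A_0\cap A_1\cap\dots\cap A_n$ with $A_i\in\mathcal F$. Each partial intersection $A_0\cap\dots\cap A_k$ contains $\supp(a)\ne\emptyset$, so the intersection property applies at every step and yields $\supp(a)\in\mathcal F$, i.e., $a\in F(\supp(a))$.

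For Case~2, with $\supp(a)=\emptyset$, the same iteration inside some $A_0\in\mathcal F$ must reach emptiness; taking the first index $j$ at which $A_0\cap\dots\cap A_j=\emptyset$, the intersection property produces $B_1:=A_0\cap\dots\cap A_{j-1}\in\mathcal F$ and $B_2:=A_j\in\mathcal F$ with $B_1\cap B_2=\emptyset$. Given any non-empty closed $A\subset X$ and any $x_0\in A$, work in the finite subspace $Y_0=B_1\cup B_2\cup\{x_0\}$ and consider the retraction $q_1\colon Y_0\to Y_0$ fixing $B_1\cup\{x_0\}$ and sending $B_2\setminus\{x_0\}$ to $x_0$; since $q_1|_{B_1}=\id$ and $a\in FB_1$, functoriality gives $F(q_1)(a)=a\in F(B_1\cup\{x_0\})$, and symmetrically $a\in F(B_2\cup\{x_0\})$. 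Since $B_1$ and $B_2$ are disjoint, these two finite sets meet in precisely $\{x_0\}$, and a final appeal to the intersection property places $a$ in $F(\{x_0\})\subset FA$. The principal obstacle is establishing the intersection property; notably, continuity of $F$ is not used at all — only monomorphicity and the functor axioms enter.
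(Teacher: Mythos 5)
The paper does not prove this lemma at all: it is quoted verbatim from the reference \cite{BMZ}, so there is no internal proof to compare against. Your argument is correct and is, in essence, the standard retraction argument one would expect to find in \cite{BMZ}: the key intersection property (for finite $A,B$ with $A\cap B\ne\emptyset$ and $a\in FA\cap FB$ one has $a\in F(A\cap B)$) is exactly right, and your verification via the two collapsing retractions $q_A,q_B$ on the finite discrete space $Y=A\cup B$, together with the identity $F(q_A)(a)=a$ forced by $q_A\circ i_A=i_A$ and monomorphicity, is sound. The reduction of both cases to this property is also correct: since the family $\{A\cap A_0:A\in\mathcal F\}$ consists of subsets of the finite set $A_0$, the support is attained as a finite intersection, and your disjoint-pair argument in Case~2 (passing through $B_1\cup\{x_0\}$ and $B_2\cup\{x_0\}$, which meet exactly in $\{x_0\}$) correctly yields $a\in F(\{x_0\})\subset FA$. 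Your closing observation that only monomorphicity and functoriality are used (continuity and epimorphicity play no role) is accurate and fits the title of the cited source, which concerns monomorphic functors with finite supports. The only loose end is the degenerate possibility $\emptyset\in\mathcal F$, i.e.\ $a\in F\emptyset$, where your ``first index $j$ with empty intersection'' bookkeeping breaks down ($B_1$ or $B_2$ could be empty); this is handled in one line, since then $a\in F\emptyset\subset FA$ for every non-empty closed $A$ via the empty inclusion, so it does not affect the validity of the proof.
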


The set of all elements with finite support in $FX$ will be denoted by $F_\w(X)$. The following lemma was proved in \cite[2.2.1]{TZ}.

\begin{lemma}\label{l:Fomega} The subset $F_\w(X)$ is dense in $FX$.
\end{lemma}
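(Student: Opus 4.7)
\medskip

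The plan is to deduce density of $F_\w(X)$ in $FX$ from continuity of $F$, via two reductions: first I would pass from an arbitrary compactum to a zero-dimensional one using epimorphicity, and then from a zero-dimensional compactum to the trivial case of finite discrete compacta.

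First I would fix $a\in FX$ and an open neighborhood $V\subset FX$ of $a$; the task is to produce a finite $A\subset X$ and $b\in FA$ (with $FA$ embedded in $FX$ via the embedding $Fi_A$ supplied by monomorphicity) such that $b\in V$. When $X$ is finite there is nothing to do, since $F_\w(X)=FX$.

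\emph{Zero-dimensional case.} If $X$ is zero-dimensional, then $X=\invlim\{X_\alpha,p_\alpha^\beta,\Sigma\}$ for some inverse spectrum of finite discrete compacta with surjective bonding maps; concretely, $\Sigma$ may be indexed by the finite clopen partitions of $X$, with $X_\alpha$ the corresponding finite quotient and $p_\alpha$ the quotient map. Continuity of $F$ gives $FX=\invlim\{FX_\alpha,Fp_\alpha^\beta,\Sigma\}$, so the basic open neighborhoods of $a$ in $FX$ have the form $(Fp_\alpha)^{-1}(U)$ for some $\alpha$ and some open $U\subset FX_\alpha$; I may shrink $V$ to such a set. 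Using surjectivity of $p_\alpha$, I pick a single preimage of each point of $X_\alpha$, assembling them into a finite subset $\tilde X_\alpha\subset X$ such that $p_\alpha|\tilde X_\alpha:\tilde X_\alpha\to X_\alpha$ is a bijection between finite discrete spaces, hence a homeomorphism. Functoriality makes $F(p_\alpha|\tilde X_\alpha):F\tilde X_\alpha\to FX_\alpha$ a homeomorphism, so I can set $\tilde b=(F(p_\alpha|\tilde X_\alpha))^{-1}(Fp_\alpha(a))\in F\tilde X_\alpha$. Identifying $F\tilde X_\alpha$ with its image in $FX$ via monomorphicity and using naturality applied to the factorization $p_\alpha|\tilde X_\alpha=p_\alpha\circ(\tilde X_\alpha\hookrightarrow X)$, I obtain $Fp_\alpha(\tilde b)=Fp_\alpha(a)\in U$, so $\tilde b\in(Fp_\alpha)^{-1}(U)\cap F_\w(X)=V\cap F_\w(X)$.

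\emph{Reduction to the zero-dimensional case.} For an arbitrary compactum $X$, I would fix a continuous surjection $g:Z\to X$ from a zero-dimensional compactum $Z$, obtained by taking $Z=\beta D$ for $D\subset X$ any dense subset carrying the discrete topology, with $g:\beta D\to X$ the unique continuous extension of the tautological inclusion $D\to X$; this $g$ is surjective since its image is compact and contains the dense set $D$, and $\beta D$ is zero-dimensional because $D$ is discrete. Epimorphicity of $F$ makes $Fg$ surjective, so I can pick $a'\in FZ$ with $Fg(a')=a$, and then $(Fg)^{-1}(V)$ is an open neighborhood of $a'$ in $FZ$. Applying the zero-dimensional case to $Z$ yields a finite $B\subset Z$ and $b'\in FB\cap(Fg)^{-1}(V)$; naturality of $F$ applied to the factorization $g|B=(g(B)\hookrightarrow X)\circ(B\to g(B))$ gives $Fg(b')\in F(g(B))\subset F_\w(X)$ since $g(B)$ is finite, while $Fg(b')\in V$ holds by the choice of $b'$. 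This produces the required element of $F_\w(X)\cap V$.

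The hard part will be the zero-dimensional case, which is the only step in which continuity of $F$ is used in an essential way; the reduction from a general compactum to a zero-dimensional one rests only on epimorphicity and functoriality. The key technical observation enabling the lifting argument is that a bijection between finite discrete spaces is a homeomorphism, hence gets mapped by $F$ to a homeomorphism; this is what allows one to invert $F(p_\alpha|\tilde X_\alpha)$ and pull back the element $Fp_\alpha(a)$ to a finite-support preimage in $FX$.
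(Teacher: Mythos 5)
Your argument is correct, and each hypothesis on $F$ is used exactly where one expects: continuity for the identification $FX\cong\invlim FX_\alpha$ over the directed system of finite clopen-partition quotients of a zero-dimensional compactum, monomorphicity to view $F\tilde X_\alpha$ and $F(g(B))$ inside $FX$, and epimorphicity to lift along the surjection $\beta D\to X$. Note that the paper does not prove this lemma at all --- it only cites Proposition 2.2.1 of Teleiko--Zarichnyi --- so there is no in-paper proof to compare against; your two-step reduction (finite quotients for zero-dimensional $X$, then a surjection from a zero-dimensional compactum for general $X$) is essentially the standard argument, and the key observation that a continuous bijection between finite discrete spaces is a homeomorphism, hence sent by $F$ to a homeomorphism, correctly supplies the finite-support lift $\tilde b$ with $Fp_\alpha(\tilde b)=Fp_\alpha(a)$.
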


For a topological space $Y$ by $\dot Y$ we shall denote the set of isolated points of $Y$.
For a surjective function $f:X\to Y$ let
$$N_f=\{y\in Y:|f^{-1}(y)|>1\}=Y\setminus D_f\mbox{ and } N^f=\{x\in X:|f^{-1}(f(x))|>1\}=X\setminus D^f$$be the {\em lower and upper non-degeneracy sets} of $f$, respectively.

\begin{lemma}\label{f-l2} For any skeletal map $f:X\to Y$ between compacta and any dense subset $A\subset X$, the set
$$\mathcal A_0=\{a\in F_\w(X):\supp(a)\subset A,\;N_{f|\supp(a)}\subset\dot Y\}$$
is dense in $FX$.
\end{lemma}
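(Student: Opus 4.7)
The plan is to approximate an arbitrary element of $FX$ by elements of $\mathcal A_0$. By Lemma~\ref{l:Fomega} it suffices to approximate each $a\in F_\w(X)$; so fix $a\in F_\w(X)$ together with a neighborhood $\mathcal W$ of $a$ in $FX$. Let $S=\supp(a)$. If $S=\emptyset$ then $a\in\mathcal A_0$ by vacuous satisfaction of both defining conditions, so assume $S=\{x_1,\dots,x_n\}$. By Lemma~\ref{l:BMZ}, $a\in FS$.

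The idea is to perturb $a$ by applying $F$ to a suitable map $g:S\to X$ close to the inclusion $j:S\hookrightarrow X$. By Lemma~\ref{f-l1} the map $F:C(S,X)\to C(FS,FX)$ is continuous, and composing with the evaluation at the fixed element $a\in FS$ yields a continuous map $C(S,X)\to FX$, $g\mapsto F(g)(a)$, that sends $j$ to $a$. Since $S$ is finite and discrete, $C(S,X)\cong X^n$ carries the product topology, and I can thus choose open neighborhoods $U_1,\dots,U_n$ of $x_1,\dots,x_n$ in $X$ such that every $g:S\to X$ with $g(x_i)\in U_i$ for all $i$ satisfies $F(g)(a)\in\mathcal W$.

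The heart of the argument is an inductive choice of points $a_i\in A\cap U_i$ (defining $g(x_i):=a_i$) so that $N_{f|\{a_1,\dots,a_n\}}\subset\dot Y$. Assuming $a_1,\dots,a_{i-1}$ have been chosen, put $E_i=\bigcup\{f^{-1}(f(a_j)):j<i,\ f(a_j)\notin\dot Y\}$. For each such $j$, the singleton $\{f(a_j)\}$ is closed in $Y$ with empty interior (its non-isolation), hence nowhere dense in $Y$; by the skeletality of $f$ the fiber $f^{-1}(f(a_j))$ is then nowhere dense in $X$. A finite union of nowhere dense sets being nowhere dense, $U_i\setminus E_i$ contains a non-empty open subset of $X$, which must meet the dense set $A$; pick $a_i$ in this intersection. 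This skeletality-based nowhere-dense argument is the crux of the proof and the step I expect to be the main obstacle.

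Finally, set $a':=F(g)(a)$. Since $g$ factors through its image, $a'\in F(g(S))$, so $\supp(a')\subset g(S)\subset A$. Moreover, whenever $f(a_i)=f(a_j)$ for some $i\neq j$ with, say, $j<i$, the construction forced $f(a_j)\in\dot Y$ (otherwise $a_i$ was chosen outside $f^{-1}(f(a_j))$), so $f(a_i)=f(a_j)\in\dot Y$. Thus $N_{f|g(S)}\subset\dot Y$, and a fortiori $N_{f|\supp(a')}\subset\dot Y$. Therefore $a'\in\mathcal A_0\cap\mathcal W$, which proves that $\mathcal A_0$ is dense in $FX$.
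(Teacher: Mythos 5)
Your proof is correct and follows essentially the same route as the paper's: reduce to elements of finite support, use the continuity of $F:C(S,X)\to C(FS,FX)$ together with the product topology on $C(S,X)$ to find neighborhoods $U_1,\dots,U_n$, and then perturb the support into $A$ so that collisions of $f$-images occur only at isolated points of $Y$. The only (minor) difference is in the selection step: the paper shows $f(U_i)$ is infinite when $f(x_i)\notin\dot Y$ and picks the new images injectively off $f(S\cap f^{-1}(\dot Y))$, whereas you use skeletality to make the fibers over non-isolated image points nowhere dense and choose each $a_i$ avoiding the finitely many such fibers already created --- both are legitimate uses of the skeletality of $f$.
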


\begin{proof} By Lemma~\ref{l:Fomega}, the set $F_\w(X)$ is dense in $FX$. So, it suffices to check that $\mathcal A_0$ is dense in $F_\w(X)$. Fix any element $a\in F_\w(X)$ and a neighborhood $O_a\subset F_\w(X)$. We need to find an element $b\in O_a\cap \mathcal A_0$.
If $\supp(a)=\emptyset$, then $a\in \mathcal A_0\cap O_a$ by the definition of $\mathcal A_0$. So we assume that $B=\supp(a)$ is not empty. By Lemma~\ref{l:BMZ}, $a\in FB$.
 By Lemma~\ref{f-l1}, the map
$$F:C(B,X)\to C(FB,FX),\;\;F:g\mapsto Fg$$is continuous and so is the map
$$F_a:C(B,X)\to FX,\;\;F_a:g\mapsto Fg(a).$$
It follows from the continuity of $F_a$ that the identity inclusion $i_B:B\to X$ has a neighborhood $O(i_B)$ in the function space $C(B,X)$ such that $F_a(g)=Fg(a)\in O_a$ for any map $g\in O(i_B)$.

We claim that there is a map $g\in O(i_B)$ such that $N_{f\circ g|B}\subset \dot Y$.
Since the compact-open topology on $C(B,X)$ coincides with the topology of pointwise convergence, for each point $x\in B$ we can find a neighborhood $O_x\subset X$ such that a map $g:B\to X$ belongs to the neighborhood $O(i_B)$ provided $g(x)\in O_x$ for all $x\in B$. Let
$C=B\cap f^{-1}(\dot Y)$.

We claim that for each point $x\in B\setminus C$ the set $f(O_x)$ is infinite. Assuming the converse, we can find a smaller neighborhood $U_x$ of $x$ such that $f(U_x)$ coincides with the singleton $\{f(x)\}$ which is open in $Y$ because of the skeletal property of $f$. In this case $f(x)\in \dot Y$ and $x\in C$, which contradicts the choice of $x$.

Let $B\setminus C=\{x_1,\dots,x_n\}$ be an enumeration of the set $B\setminus C$.
By finite induction for every $i\le n$ choose a point $x_i'\in O_{x_i}\cap A$ such that $f(x_i')\notin f(C)\cup\{f(x_j'):j<i\}$. As $f(O_{x_i})$ is infinite and $A$ is dense in $X$, the choice of $x_i'$ is always possible. After completing the inductive construction, define a map $g:B\to X$ letting  $g(x_i)=x_i'$ for $i\le n$ and $g(x)\in O_x\cap A\cap f^{-1}(f(x))$ for any $x\in C$. By the construction, $g\in O(i_B)$ and the map $f\circ g|B\setminus C$ is injective, which means that $N_{f\circ g|B}\subset f(C)\subset\dot Y$.

By the choice of the neighborhood $O(i_B)$, the element $b=Fg(a)$ lies in the neighborhood $O_a$. Since $b\in F(g(B))$, we get $\supp(b)\subset g(B)\subset A$, witnessing that $b\in\mathcal A_0$.
\end{proof}

\begin{lemma}\label{f-l3} Let $f:X\to Y$ be a skeletal map between compact Hausdorff spaces. If $\dot Y\subset D_f$, then for every dense subset $A\subset X$ the set
$$\mathcal A_1=\{a\in F_\w(X):\supp(a)\subset A,\;f|\supp(a) \mbox{ is 1-to-1}\}$$is dense in $FX$.
\end{lemma}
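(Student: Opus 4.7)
The plan is to reduce this to the already-proved Lemma~\ref{f-l2} by observing that, under the extra hypothesis $\dot Y\subset D_f$, the set $\mathcal A_0$ from that lemma is in fact contained in $\mathcal A_1$. Then density of $\mathcal A_1$ in $FX$ follows immediately from density of $\mathcal A_0$.

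To justify the inclusion $\mathcal A_0\subset\mathcal A_1$, I take any $a\in\mathcal A_0$. If $\supp(a)=\emptyset$, then the restriction $f|\supp(a)$ is vacuously injective, so $a\in\mathcal A_1$. Otherwise, I argue by contradiction: if $f|\supp(a)$ is not injective, pick two distinct points $x_1,x_2\in\supp(a)$ with $f(x_1)=f(x_2)=y$. Then $y$ lies in the non-degeneracy set $N_{f|\supp(a)}$, which by the definition of $\mathcal A_0$ is contained in $\dot Y$. The hypothesis $\dot Y\subset D_f$ then gives $|f^{-1}(y)|=1$, contradicting the existence of the two distinct preimages $x_1,x_2\in\supp(a)\subset X$. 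Hence $f|\supp(a)$ is injective, i.e.\ $a\in\mathcal A_1$.

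Since the conditions $\supp(a)\subset A$ and $a\in F_\omega(X)$ are already built into $\mathcal A_0$, no further work is required: the density of $\mathcal A_1$ in $FX$ follows from Lemma~\ref{f-l2}. There is essentially no obstacle here — the content of the lemma is already contained in Lemma~\ref{f-l2}, and the extra assumption $\dot Y\subset D_f$ is exactly what is needed to upgrade ``the only duplications of $f$ on $\supp(a)$ happen at isolated points of $Y$'' to ``there are no duplications at all,'' simply because isolated points of $Y$ have singleton preimages under $f$ by hypothesis.
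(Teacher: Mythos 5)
Your proposal is correct and follows the same route as the paper: both reduce to Lemma~\ref{f-l2} by showing $\mathcal A_0\subset\mathcal A_1$ under the hypothesis $\dot Y\subset D_f$ (the paper phrases this as the inclusion chain $N_{f|\supp(a)}\subset f(\supp(a))\cap\dot Y\subset f(\supp(a))\cap D_f\subset D_{f|\supp(a)}$, forcing $N_{f|\supp(a)}=\emptyset$, while you argue by contradiction with two explicit preimage points — the same observation). Nothing is missing.
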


\begin{proof} By Lemma~\ref{f-l2}, the set $$\mathcal A_0=\{a\in F_\w(X):\supp(a)\subset A,\;N_{f|\supp(a)}\subset\dot Y\}$$ is dense in $FX$. Observe that for each $a\in\mathcal A_0$ we get
$N_{f|\supp(a)}\subset f(\supp(a))\cap \dot Y\subset f(\supp(a))\cap D_f\subset D_{f|\supp(a)}$, which implies $N_{f|\supp(a)}=\emptyset$ and  $a\in\mathcal A_1$. Now we see that the density of the set $\mathcal A_0$ implies the density of the set $\mathcal A_1\supset\mathcal A_0$ in $FX$.
\end{proof}

\section{1-Mec functors and densely open squares}

In this section we assume that $F:\Comp\to\Comp$ is a 1-mec functor and study its action on densely open squares. Let $\mathcal D$ be a commutative square
 $$\xymatrix{
\tilde X\ar[r]^{\tilde f}\ar[d]_{p_X}&\tilde Y\ar[d]^{p_Y}\\
X\ar[r]_{f}&Y
}$$ consisting of surjective maps between compact Hausdorff spaces. Applying the functor $F$ to this square, we obtain the commutative square $F\mathcal D$:
 $$\xymatrix{
F\tilde X\ar[r]^{F\tilde f}\ar[d]_{Fp_X}&F\tilde Y\ar[d]^{Fp_Y}\\
FX\ar[r]_{Ff}&FY.
}$$

\begin{lemma}\label{fo-l1} If the space $X$ is first countable and the square $\mathcal D$ is open at a non-empty subset $A\subset X$, then the square $F\mathcal D$ is open at the subset
$$\mathcal A_1=\{a\in F_\w(X):\supp(a)\subset A,\;f|\supp(a) \mbox{ is 1-to-1}\}\subset FX.$$
If $\dot Y\subset D_f$ and the set $A$ is dense in $X$, then the set $\mathcal A_1$ is dense in $FX$ and hence the square $F\mathcal D$ is densely open.
\end{lemma}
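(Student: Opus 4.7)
The plan is to reduce openness of $F\mathcal D$ at a point $a\in\mathcal A_1$ to an instance of Lemma \ref{l2}, after passing to a suitable closed subspace $Z\subset X$ on which $f$ becomes locally bijective around $\supp(a)$.

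Fix $a\in\mathcal A_1$ and let $B=\supp(a)$; if $B=\emptyset$, replace it by a singleton $\{x_0\}$ for an arbitrary point $x_0\in A$, which still satisfies $a\in FB$ by Lemma \ref{l:BMZ}. Then $B\subset A$ is finite, $f|B$ is injective, $X$ is first countable at every point of $B$, and $\mathcal D$ is open at $B$. Lemma \ref{l3} therefore produces a closed subset $Z\subset X$ with $B\subset D^{f|Z}$, $f(Z)=Y$, and $\tilde f(p_X^{-1}(Z))=\tilde Y$. Set $\tilde Z=p_X^{-1}(Z)$ and restrict the maps of $\mathcal D$ to obtain a commutative square $\mathcal D'$ of surjections between compacta. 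Applying the epimorphic, monomorphic functor $F$ and using the monomorphic inclusions $FZ\hookrightarrow FX$ and $F\tilde Z\hookrightarrow F\tilde X$, we get a commutative square $F\mathcal D'$ of surjections sitting inside $F\mathcal D$.

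The main step is to verify that $a\in D^{F(f|Z)}$. Because $B\subset D^{f|Z}$ and $f|B$ is injective, we have $(f|Z)^{-1}(f(B))=B$; preservation of finite 1-preimages by the 1-mec functor $F$ then yields $(F(f|Z))^{-1}(F(f(B)))=F(B)$. Moreover, since $f|B\colon B\to f(B)$ is a bijection between finite spaces, $F(f|B)\colon F(B)\to F(f(B))$ is again a bijection and coincides with the restriction of $F(f|Z)$ to $F(B)$. Combined with $a\in F(B)$ and $Ff(a)=F(f|Z)(a)\in F(f(B))$, these facts force $(F(f|Z))^{-1}(Ff(a))=\{a\}$, i.e., $a\in D^{F(f|Z)}$.

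By Lemma \ref{l2}, the square $F\mathcal D'$ is open at $a$. This openness lifts to $F\mathcal D$: given any open neighborhood $\mathcal U\subset FX$ of $a$, applying openness of $F\mathcal D'$ to the neighborhood $\mathcal U\cap FZ$ of $a$ in $FZ$ produces an open neighborhood $\mathcal V\subset FY$ of $Ff(a)$ with $\mathcal V\subset Ff(\mathcal U\cap FZ)\subset Ff(\mathcal U)$ and $(Fp_Y)^{-1}(\mathcal V)\subset F\tilde f\bigl((Fp_X|F\tilde Z)^{-1}(\mathcal U\cap FZ)\bigr)\subset F\tilde f\bigl((Fp_X)^{-1}(\mathcal U)\bigr)$, so $F\mathcal D$ is open at $a$. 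Under the additional hypotheses $\dot Y\subset D_f$ and $A$ dense in $X$, Lemma \ref{f-l3} guarantees that $\mathcal A_1$ is dense in $FX$, so $F\mathcal D$ is densely open. The delicate point of the whole argument is the verification that $a\in D^{F(f|Z)}$, which is precisely where the preservation of finite 1-preimages by $F$ is indispensable.
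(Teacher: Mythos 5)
Your proof is correct and follows essentially the same route as the paper's: reduce via Lemma~\ref{l3} to a closed subspace $Z$ with $B=\supp(a)\subset D^{f|Z}$, apply $F$, use preservation of finite 1-preimages to place $a$ (indeed all of $FB$) in $D^{F(f|Z)}$, invoke Lemma~\ref{l2} on the restricted square, lift openness back to $F\mathcal D$ (the paper cites Lemma~\ref{l4} where you spell the lifting out), and finish the density claim with Lemma~\ref{f-l3}. Your explicit verification that $(F(f|Z))^{-1}(F(f(B)))=FB$ forces $a\in D^{F(f|Z)}$ is a correct elaboration of the step the paper states tersely.
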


\begin{proof} Fix any point $b\in \mathcal A_1$ and consider its support $\supp(b)$. If it is not empty, put $B=\supp(b)$. If $\supp(b)$ is empty, put $B=\{z\}\subset A$ be any singleton in $A$. In both cases we have that $B\subset A$, $f|B$ is injective, and $b\in FB$, see Lemma~\ref{l:BMZ}.
Let $C=f(B)$ and observe that $f|B:B\to C$ is a homeomorphism. By
Lemma~\ref{l3}, there is a closed subset $Z\subset X$ such that
$B\subset D^{f|Z}$, $f(Z)=Y$ and $\tilde f(p_X^{-1}(Z))=\tilde Y$.

Let $\tilde Z=p_X^{-1}(Z)$, $p_Z=p_X|\tilde Z$, $f_Z=f|Z$, $\tilde f_Z=\tilde f|\tilde Z$ and consider the commutative square $\mathcal D_Z$:
 $$\xymatrix{
\tilde Z\ar[r]^{\tilde f_Z}\ar[d]_{p_Z}&\tilde Y\ar[d]^{p_Y}\\
Z\ar[r]_{f_Z}&Y
}$$that consists of surjective maps.  Applying to this square the epimorphic functor $F$, we obtain the commutative square $F\mathcal D_Z$:
 $$\xymatrix{
F\tilde Z\ar[r]^{F\tilde f_Z}\ar[d]_{Fp_Z}&F\tilde Y\ar[d]^{Fp_Y}\\
FZ\ar[r]_{Ff_Z}&FY,
}$$also consisting of surjective maps.

Taking into account that $B\subset D^{f_Z}$ and $F$ preserves finite 1-preimages, we conclude that $FB\subset D^{Ff_Z}$. By Lemma~\ref{l2}, the square $F\mathcal D_Z$ is open at $FB$. Applying Lemma~\ref{l4}, we conclude that the square $F\mathcal D$ is open at $FB$. In particular, $F\mathcal D$ is open at the point $b\in FB$.

If $\dot Y\subset D_f$, then by Lemma~\ref{f-l3}, the set $\mathcal A_1$ is dense in $FX$ and hence the square $F\mathcal D$ is densely open.
\end{proof}

\section{1-Mec functors and skeletal maps}\label{s:fsm}

In this section we study the action of 1-mec functors on some special types of skeletal maps.
As in the preceding section, $F:\Comp\to\Comp$ is a 1-mec functor in the category of compact Hausdorff spaces. Our principal result is the following theorem.

\begin{theorem}\label{t5n} For any surjective skeletal map $f:X\to Y$ between compact Hausdorff spaces the map $Ff:FX\to FY$ is skeletal at the subset
$$\mathcal A_1=\{a\in F_\w(X):f|\supp(a) \mbox{ is 1-to-1}\}.$$
If $\dot Y\subset D_f$, then the set $\mathcal A_1$ is dense in $FX$ and hence the map $Ff$ is skeletal.
\end{theorem}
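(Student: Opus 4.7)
The plan is to reduce the skeletality of $Ff$ at $a$ to the metrizable setting via the spectral characterization of skeletal maps (Theorem~\ref{skel-char-comp}) and then invoke Lemma~\ref{fo-l1}. Write $f\cong\invlim f_\alpha$ for a skeletal morphism $\{f_\alpha\}:\mathcal S_X\to\mathcal S_Y$ between $\w$-spectra of metrizable compacta with surjective limit projections $p_\alpha:X\to X_\alpha$ and $\pi_\alpha:Y\to Y_\alpha$. Continuity of $F$ gives $FX=\invlim F\mathcal S_X$ and $FY=\invlim F\mathcal S_Y$; let $\mathcal D_\alpha$ denote the commutative spectral squares, whose $F$-images $F\mathcal D_\alpha$ consist of surjective maps. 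Each $\mathcal D_\alpha$ is a skeletal square by Theorem~\ref{skel-char-comp}, so by Lemma~\ref{l1b} (using metrizability of $X_\alpha$ and surjectivity of $\pi_\alpha$) each $\mathcal D_\alpha$ is open at a dense $G_\delta$ subset $D_\alpha\subset X_\alpha$. Applying Lemma~\ref{fo-l1} to $\mathcal D_\alpha$ with $A=D_\alpha$ yields that $F\mathcal D_\alpha$ is open at
$$\mathcal A_1^\alpha:=\{b\in F_\w(X_\alpha):\supp(b)\subset D_\alpha,\ f_\alpha|\supp(b) \text{ is 1-to-1}\}\subset FX_\alpha.$$

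For the main assertion, fix $a\in\mathcal A_1$ with $B:=\supp(a)$, $C:=f(B)$, and an arbitrary open neighborhood $O_a$ of $a$ in $FX$; we must find a non-empty open $V\subset FY$ with $V\subset Ff(O_a)$. Using $FX=\invlim F\mathcal S_X$, pick an index $\alpha$ and an open set $U_\alpha\subset FX_\alpha$ with $a_\alpha:=Fp_\alpha(a)\in U_\alpha$ and $(Fp_\alpha)^{-1}(U_\alpha)\subset O_a$. Because $B$ and $C$ are finite and limit projections of $\w$-spectra separate points, enlarge $\alpha$ so that $p_\alpha|B$ and $\pi_\alpha|C$ are injective; commutativity $\pi_\alpha\circ f=f_\alpha\circ p_\alpha$ combined with injectivity of $f|B$ then forces $f_\alpha|B_\alpha$ to be injective, where $B_\alpha:=p_\alpha(B)$. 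The edge case $\supp(a)=\emptyset$ is handled as in the proof of Lemma~\ref{fo-l1} by running the same argument with $B=\{z\}$ for any $z\in X$, using $a\in F\{z\}$.

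The crucial step is to perturb $a_\alpha$ into $\mathcal A_1^\alpha\cap U_\alpha$. By continuity of the map $g\mapsto Fg(a_\alpha)$ (Lemma~\ref{f-l1}) and the fact that compact-open and pointwise topologies coincide on $C(B_\alpha,X_\alpha)$ for finite $B_\alpha$, choose for each $x\in B_\alpha$ a neighborhood $O_x\subset X_\alpha$ of $x$ so small that any $g:B_\alpha\to X_\alpha$ with $g(x)\in O_x$ satisfies $Fg(a_\alpha)\in U_\alpha$. Shrink the $O_x$ further to be pairwise disjoint and to have pairwise disjoint $f_\alpha$-images (possible since $f_\alpha|B_\alpha$ is injective and $f_\alpha$ is continuous), and use density of $D_\alpha$ in $X_\alpha$ to pick $g(x)\in O_x\cap D_\alpha$ for each $x\in B_\alpha$. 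Then $b_\alpha:=Fg(a_\alpha)\in U_\alpha$, $\supp(b_\alpha)\subset g(B_\alpha)\subset D_\alpha$, and $f_\alpha$ is injective on $g(B_\alpha)$; hence $b_\alpha\in\mathcal A_1^\alpha\cap U_\alpha$.

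Finally, the openness of $F\mathcal D_\alpha$ at $b_\alpha$ applied to the neighborhood $U_\alpha$ produces an open neighborhood $W\subset FY_\alpha$ of $Ff_\alpha(b_\alpha)$ with $(F\pi_\alpha)^{-1}(W)\subset Ff((Fp_\alpha)^{-1}(U_\alpha))\subset Ff(O_a)$. Then $V:=(F\pi_\alpha)^{-1}(W)$ is open in $FY$, non-empty because $W\ni Ff_\alpha(b_\alpha)$ and $F\pi_\alpha$ is surjective by epimorphicity of $F$, and contained in $Ff(O_a)$, proving skeletality of $Ff$ at $a$. The density assertion when $\dot Y\subset D_f$ is immediate from Lemma~\ref{f-l3} with $A=X$; density of $\mathcal A_1$ combined with pointwise skeletality on $\mathcal A_1$ then forces $Ff$ to be skeletal globally, since every non-empty open $U\subset FX$ contains a point of $\mathcal A_1$ at which $Ff$ is skeletal, giving $\int\cl(Ff(U))\ne\emptyset$. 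The main technical obstacle I anticipate is the perturbation step, where the neighborhoods $O_x$ must simultaneously be small enough to keep $Fg(a_\alpha)$ inside $U_\alpha$, land inside $D_\alpha$, and preserve injectivity of $f_\alpha$ on $g(B_\alpha)$; this is a careful but routine piece of neighborhood bookkeeping.
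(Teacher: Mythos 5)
Your proposal is correct and follows essentially the same route as the paper's proof: spectral decomposition via Theorem~\ref{skel-char-comp}, Lemma~\ref{l1b} to get dense openness of the metrizable limit squares, a perturbation of $a_\alpha$ into an element whose support lies in the dense set and on which $f_\alpha$ is injective (the paper simply cites "the argument from Lemma~\ref{f-l2}" where you write it out), and then Lemma~\ref{fo-l1} plus pulling back the open set along $F\pi_\alpha$. The density claim under $\dot Y\subset D_f$ is handled identically via Lemma~\ref{f-l3}.
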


\begin{proof} By Theorem~\ref{skel-char-comp}, the skeletal map $f:X\to Y$ can be identified with the limit map $\invlim f_\alpha$ of a morphism $\vec f=\{f_\alpha\}_{\alpha\in \Sigma }:\mathcal S_X\to \mathcal S_Y$ between some $\w$-spectra $\mathcal S_X=\{X_\alpha,p_\alpha^\beta,\Sigma\}$ and $\mathcal S_Y=\{Y_\alpha,\pi_\alpha^\beta,\Sigma\}$ with surjective limit projections such that for any $\alpha\in \Sigma $ the limit square $\mathcal D_\alpha$:
$$\xymatrix{
X\ar[r]^{f}\ar[d]_{p_\alpha}&Y\ar[d]^{\pi_\alpha}\\
X_\alpha\ar[r]_{f_\alpha}&Y_\alpha
}$$
is skeletal.

To show that the map $Ff$ is skeletal at each point $a\in\mathcal A_1$, fix any open neighborhood
$U\subset FX$ of $a$. We need to prove that the image $Ff(U)$ has non-empty interior in $FY$.
The inclusion $a\in\mathcal A_1$ implies that the restriction $f|\supp(a)$ is injective.

By the continuity of the functor $F$, there is an index $\alpha\in \Sigma $ and an open neighborhood $U_\alpha\subset FX_\alpha$ of the point $a_\alpha=Fp_\alpha(a)$  such that $U\supset (Fp_\alpha)^{-1}(U_\alpha)$. Replacing $\alpha$ by a larger index, if necessary, we can additionally assume that the restriction $p_\alpha|\supp(a)$ and $\pi_\alpha\circ f|\supp(a)$ are injective. Then the map $f_\alpha|p_\alpha(\supp(a))$ also is injective. Since $\supp(a_\alpha)\subset p_\alpha(\supp(a))$, the restriction $f_\alpha|\supp(a_\alpha)$ is injective.

By our assumption the limit square $\mathcal D_\alpha$ is skeletal and by Lemma~\ref{l1b} it is open at some dense subset $A_\alpha\subset X_\alpha$. Repeating the argument from the proof of Lemma~\ref{f-l2}, we can approximate the element $a_\alpha$ by an element $a_\alpha'\in U_\alpha$ such that $\supp(a_\alpha')\subset A_\alpha$ and the map $f_\alpha|\supp(a_\alpha')$ is injective. By Lemma~\ref{fo-l1}, the square $F\mathcal D_\alpha$ is open at the point $a'_\alpha$.
Then for the neighborhood $U_\alpha$ of $a_\alpha'$ there is a non-empty open set $V_\alpha\subset FY_\alpha$ such that $V_\alpha\subset Ff_\alpha(U_\alpha)$ and the open subset $V=(F\pi_\alpha)^{-1}(V_\alpha)$ of the space $FY$ lies in the image $Ff((Fp_X)^{-1}(U_\alpha))\subset Ff(U)$, which completes the proof of the skeletality of $Ff$ at $a$.

If $\dot Y\subset D_f$, then the set $\mathcal A_1$ is dense in $FX$ by Lemma~\ref{f-l3} and hence the map $Ff:FX\to FY$ is skeletal.
\end{proof}

A map $f:X\to Y$ between topological spaces is called {\em irreducible} if $f(X)=Y$ but $f(Z)\ne Y$ for each closed subset $Z\subset X$.

\begin{corollary}\label{fo-l3} For each irreducible map $f:X\to Y$ between compact Hausdorff spaces the map $Ff:FX\to FY$ is skeletal.
\end{corollary}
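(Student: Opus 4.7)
My plan is to reduce the corollary to Theorem~\ref{t5n} by checking its two hypotheses for an irreducible surjection $f:X\to Y$: namely that $f$ itself is skeletal and that $\dot Y\subset D_f$. Once both are established, Theorem~\ref{t5n} immediately gives that $Ff:FX\to FY$ is skeletal on a dense subset of $FX$, hence skeletal.

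For the skeletality of $f$, I would argue directly that irreducible maps are in fact quasi-open. Given a non-empty open $U\subset X$, the set $X\setminus U$ is a proper closed subset, so by irreducibility $f(X\setminus U)$ is a proper closed subset of $Y$; hence $V:=Y\setminus f(X\setminus U)$ is a non-empty open subset of $Y$. Since $f$ is surjective, $V\subset f(U)$, so $f(U)$ has non-empty interior, which is more than enough to conclude that $f$ is skeletal.

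For the inclusion $\dot Y\subset D_f$, I would take an isolated point $y\in \dot Y$ and argue by contradiction. The preimage $f^{-1}(y)$ is then clopen in $X$. If $|f^{-1}(y)|\ge 2$, pick distinct $x_0,x_1\in f^{-1}(y)$ and, using the Hausdorff property, choose an open neighborhood $V\subset f^{-1}(y)$ of $x_0$ with $x_1\notin V$. Then $Z=X\setminus V$ is a proper closed subset of $X$, but $f(Z)=Y$: fibers over points $y'\neq y$ lie inside $f^{-1}(Y\setminus\{y\})\subset Z$, while $x_1\in f^{-1}(y)\cap Z$ gives $y\in f(Z)$. This contradicts irreducibility, forcing $|f^{-1}(y)|=1$, i.e.\ $y\in D_f$.

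The likely obstacle is nothing deep — merely getting the set-theoretic bookkeeping right in the second step, in particular noting that one must shrink $V$ into $f^{-1}(y)$ before complementing, so that removing $V$ does not disturb the fibers over other points of $Y$. With these two observations in hand, an application of Theorem~\ref{t5n} closes the argument.
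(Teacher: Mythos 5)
Your proposal is correct and follows exactly the paper's route: the paper's proof is the one-line observation that the corollary follows from Theorem~\ref{t5n} because every irreducible map between compacta is skeletal and satisfies $\dot Y\subset D_f$. You simply supply the (correct) details of these two facts that the paper leaves to the reader.
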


\begin{proof} This lemma follows from Theorem~\ref{t5n} because each closed irreducible map $f:X\to Y$ is skeletal and has $\dot Y\subset D_f$.
\end{proof}

\section{Preimage preserving functors and skeletal maps}

The following theorem implies that for normal functors $F$ the skeletality of a map $f:X\to Y$ between compacta follows from the skeletality of the map $Ff$.

\begin{theorem}\label{preimage} Let $F:\Comp\to\Comp$ be a preimage preserving mec-functor such that $F1\ne F2$. A surjective map $f:X\to Y$ between compact Hausdorff spaces is skeletal if the map $Ff:FX\to FY$ is skeletal.
\end{theorem}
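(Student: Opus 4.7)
The argument proceeds by contradiction: assume $f:X\to Y$ is surjective but not skeletal, and derive that $Ff$ is not skeletal. Since $f$ fails to be skeletal, pick a nonempty open $U\subset X$ with $B:=\overline{f(U)}$ nowhere dense in $Y$, and set $C:=f^{-1}(B)$. Then $U\subset C$, so $C$ has nonempty interior, and preimage preservation yields the key identity $(Ff)^{-1}(FB)=F(f^{-1}(B))=FC$.

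The first step is to show that $FB$ is nowhere dense in $FY$. Preimage preservation applied to inclusions of closed sets forces intersection preservation, so $FS\cap FB=F(S\cap B)=F\emptyset$ for every finite $S\subset Y\setminus B$. Applying Lemma~\ref{f-l2} to the skeletal map $\id_Y$ and the dense subset $Y\setminus B$ produces the dense subset $\mathcal A_0=\{a\in F_\omega(Y):\supp(a)\subset Y\setminus B\}$ of $FY$; for each $a\in\mathcal A_0$ with nonempty support $S$, Lemma~\ref{l:BMZ} places $a$ in $FS$, so $a\in FB$ only if $a\in F\emptyset$. The hypothesis $F1\ne F2$, combined with another application of Lemma~\ref{f-l2} to $A=Y\setminus\{y\}$ for a non-isolated $y\in Y$, forces $F\emptyset$ (which embeds in $F\{y\}$) to be nowhere dense in $FY$; hence $\mathcal A_0\setminus F\emptyset$ is dense in $FY\setminus FB$.

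The second, harder step is to exhibit a closed nowhere dense subset $A\subset FY$ whose preimage $(Ff)^{-1}(A)$ has nonempty interior in $FX$, which directly contradicts the skeletality of $Ff$. The naive choice $A=FB$ already fails for familiar functors: for the probability-measure functor $P$, the preimage $(Pf)^{-1}(PB)=PC$ has empty interior in $PX$ whenever $C\subsetneq X$. Instead, one constructs a strictly larger nowhere dense closed subset of $FY$, the functor-theoretic counterpart of $\{\nu:\nu(B)\ge\tfrac12\}$ in the case $F=P$, using preimage preservation and $F1\ne F2$. Concretely, picking $x_0\in\int(C)$ and $x_1\in X\setminus C$ and using $F1\ne F2$ to produce an element $a_0\in F\{x_0,x_1\}$ with full support, one uses Lemma~\ref{f-l1} (continuity of the $F$-action on function spaces) together with preimage preservation to locate an open neighborhood $W$ of $a_0$ in $FX$ whose $Ff$-image lies in the desired nowhere dense set $A$. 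The main obstacle is the explicit construction of $A$; one natural route is to reduce to the metrizable case via \v S\v cepin's spectral decomposition and Theorem~\ref{skel-char-comp}, transfer the problem to $F[0,1]$ via a Urysohn function $\phi:Y\to[0,1]$ with $\phi^{-1}(1)=B$, and finally construct the analogue of a ``half-mass'' threshold functorially as the $F\phi$-preimage of a suitable closed subset of $F[0,1]$.
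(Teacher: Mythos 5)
Your proposal correctly isolates the crux of the matter --- by preimage preservation the naive candidate $FB$ has preimage $FC$, which is itself typically nowhere dense in $FX$, so one needs a different witness to the non-skeletality of $Ff$ --- but it does not overcome it. You explicitly leave ``the explicit construction of $A$'' open, and the route you sketch (a functorial ``half-mass'' threshold transported through a Urysohn map into $F[0,1]$) has no analogue for a general preimage-preserving mec-functor, which carries no convex, metric or threshold structure; the picture you borrow from $F=P$ does not functorialize. As written, the argument is therefore incomplete at its decisive step.

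The paper closes exactly this gap by choosing the witness on the $FX$ side rather than the $FY$ side. Assuming $f^{-1}(N)$ contains a non-empty open set $U$ for some nowhere dense $N\subset Y$, it considers the open set $\U=FX\setminus F(X\setminus U)$. This set is non-empty: mapping $Z=(X\setminus U)\cup\{u\}$ onto $2$ by sending $X\setminus U$ to $0$ and $u\in U$ to $1$, the hypotheses $F1\ne F2$ and epimorphicity yield an element of $FZ$ landing outside $F1$, hence outside $F(X\setminus U)$. If $Ff$ were skeletal, $Ff(\U)$ would contain a non-empty open $\V\subset FY$, which by Lemma~\ref{l:Fomega} (and the nowhere density of $N$) meets the dense set of finitely supported elements $b$ with $\supp(b)\subset Y\setminus N$. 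Preimage preservation then forces every $Ff$-preimage of such a $b$ into $F\bigl(f^{-1}(\supp(b))\bigr)\subset F(X\setminus f^{-1}(N))\subset F(X\setminus U)$, i.e.\ outside $\U$ --- a contradiction. Note that this argument also hands you the nowhere dense set you were hunting for: the $Ff$-image of any closed subset of $\U$ with non-empty interior is closed and disjoint from that dense set of finitely supported elements, hence nowhere dense, while its preimage is not. Your preliminary step (nowhere density of $FB$) is not needed at all, and its justification via $F\emptyset$ is itself shaky, since a mec-functor is not assumed to preserve the empty set or intersections.
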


\begin{proof} Assume that the map $Ff$ is skeletal. To show that $f:X\to Y$ is skeletal, fix a nowhere dense subset $N\subset Y$. We need to show that its preimage $f^{-1}(N)$ is nowhere dense in $X$. Assume conversely that $f^{-1}(N)$ contains some non-empty open set $U$. The set $F(X\setminus U)$ is closed in $FX$ and its complement $\U=FX\setminus F(X\setminus U)$ is open in $FX$. Let us show that the set $\U$ is not empty. Fix any point $u\in U$ and consider the closed subspace $Z=(X\setminus U)\cup \{u\}$ of $X$ and the continuous map $p:Z\to 2=\{0,1\}$ such that $p^{-1}(0)=X\setminus U$ and $p^{-1}(1)=\{u\}$.
By our hypothesis, $F1\ne F2$. So we can find an element $b'\in F2\setminus F1$. Since the functor $F$ is epimorphic, there is an element $a'\in FZ$ such that $Fp(a')=b'$. The element $a'$ does not belong to $F(X\setminus U)$, which implies that the sets $FZ\setminus F(X\setminus U)$ and $\U=FX\setminus F(X\setminus U)$ are not empty.

Since the map $Ff:FX\to FY$ is skeletal, the image $Ff(\U)$ of the non-empty open set $\U\subset FX$ has non-empty interior in $FY$ and hence contains some non-empty open subset $\V\subset Ff(\U)$ of the space $FY$. Since the set $N$ is nowhere dense in $Y$, the subspace $F_\w(X\setminus N)=\{a\in F_\w(X):\supp(a)\subset X\setminus N\}$ is dense in $FX$. So, we can find an element $b\in F_\w(X)\cap\V$ with finite support $\supp(b)\subset Y\setminus N$. Let $A=\supp(b)$ if $\supp(b)\ne \emptyset$ and $A=\{y\}\subset Y\setminus N$ be any singleton in $Y\setminus N$ if $\supp(b)=\emptyset$. By \cite{BMZ}, $b\in F(A)\subset F(X\setminus N)$.
Since $b\in\V\subset Ff(\U)$, there is an element $a\in\U=FX\setminus F(X\setminus U)$ with $Ff(a)=b$.
Observe that $f^{-1}(A)\subset f^{-1}(Y\setminus N)=X\setminus f^{-1}(N)\subset X\setminus U$.
Since the functor $F$ preserves preimages, we conclude that
$a\in F(f^{-1}(A))\subset F(X\setminus U)$, which contradicts the choice of $a$. This contradiction shows that the set $f^{-1}(N)$ is nowhere dense in $X$ and hence the map $f$ is skeletal.
\end{proof}

\section{Proof of Theorem~\ref{s-map}}\label{s:s-map}

To prove the ``1-mec'' part of Theorem~\ref{s-map}, assume that $F:\Comp\to\Comp$ is a 1-mec functor such that for each compact zero-dimensional space $Z$ the map $F{\mathscr 2}_Z:F(Z\oplus 2)\to F(Z\oplus 1)$ is skeletal.

\begin{lemma}\label{pm-l0} For any surjective map $f:A\to B$ between finite discrete spaces and any compact zero-dimensional space $Z$ the map $F(\id_Z\oplus f):F(Z\oplus A)\to F(Z\oplus B)$ is skeletal.
\end{lemma}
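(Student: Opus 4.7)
The plan is to reduce to the hypothesis by factoring $f$ into a composition of \emph{elementary} surjections, each identifying exactly two points of the domain. This reduction works because $F$ is a functor, because the operation $\id_Z\oplus(-)$ respects composition, and because a composition of skeletal maps is skeletal (directly from the definition: if $N$ is nowhere dense in the target, pull back in two steps).

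First, I would observe that any surjection $f:A\to B$ between finite discrete spaces can be written as a composition $f=f_1\circ\cdots\circ f_k$ with $k=|A|-|B|$, where each $f_i:A_i\to A_{i-1}$ identifies exactly one pair of points of $A_i$ into one point of $A_{i-1}$ and acts bijectively on the rest (with $A_0=B$ and $A_k=A$). Applying $\id_Z\oplus(-)$ and then $F$ yields
$$F(\id_Z\oplus f)=F(\id_Z\oplus f_1)\circ\cdots\circ F(\id_Z\oplus f_k),$$
so it suffices to check that $F(\id_Z\oplus f_i)$ is skeletal for each $i$.

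Next, I would fix such an $f_i:A_i\to A_{i-1}$, collapsing a two-point set $\{a,a'\}\subset A_i$ to a point $b\in A_{i-1}$, and set $Z'=Z\oplus(A_i\setminus\{a,a'\})$. Since $Z$ is compact zero-dimensional and $A_i\setminus\{a,a'\}$ is finite discrete, so is $Z'$. The natural regrouping of coproducts identifies $Z\oplus A_i$ with $Z'\oplus 2$, and combining this with the bijection $f_i|_{A_i\setminus\{a,a'\}}$ (and sending $b$ to the single point of $1$) gives a homeomorphism $Z\oplus A_{i-1}\cong Z'\oplus 1$. Under these two identifications the map $\id_Z\oplus f_i$ becomes exactly $\mathscr{2}_{Z'}$. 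Applying $F$ and using its functoriality, we obtain that $F(\id_Z\oplus f_i)$ is homeomorphic to $F\mathscr{2}_{Z'}$, which is skeletal by the standing hypothesis of the section.

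The only step requiring genuine verification is that the regrouping homeomorphisms conjugate $\id_Z\oplus f_i$ to $\mathscr{2}_{Z'}$, which is a routine diagram chase from the definition of the topological coproduct. I expect no real obstacle: all the interesting work has already been done in the assumed skeletality of $F\mathscr{2}_{Z'}$, and the lemma reduces to combinatorial bookkeeping about decomposing a finite surjection into two-to-one collapses.
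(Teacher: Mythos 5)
Your proof is correct and follows essentially the same route as the paper: factor $f$ through a chain of elementary surjections each collapsing one pair of points, observe that $\id_Z\oplus f_i$ is homeomorphic to $\mathscr 2_{Z_i}$ for a suitable zero-dimensional compactum $Z_i$ obtained by adjoining the untouched points to $Z$, and conclude since compositions of skeletal maps are skeletal. Your bookkeeping for $Z'$ (namely $Z\oplus(A_i\setminus\{a,a'\})$, the points mapped injectively) is the right one.
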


\begin{proof} Let $n=|A|\setminus |B|$ and $(A_i)_{i=0}^n$ be an increasing sequence of subsets of $A$ such that $|A_0|=|B|$, $f(A_0)=B$, $A_n=A$ and $|A_{i+1}\setminus A_i|=1$ for every $i<n$. For every positive number  $i\le n$ choose a surjective map $f_i:A_{i}\to A_{i-1}$ such that $f\circ f_i=f|A_{i}$. Observe that
$$\id_Z\oplus f=(\id_Z\oplus f_1)\circ\cdots\circ(\id_Z\oplus f_n)$$ and for every $i\le n$ the map $\id_Z\oplus f_i$ is homeomorphic to the map $\mathscr 2_{Z_i}$ where $Z_i=Z\oplus (A_{i-1}\setminus D_{f_i})$. By our assumption the map $F(\mathscr 2_{Z_i})$ is skeletal and so is its homeomorphic copy $F(\id_Z\oplus f_i)$. Since the composition of skeletal maps between compacta is skeletal, the map
$$F(\id_Z\oplus f)=F(\id_Z\oplus f_1)\circ\cdots\circ F(\id_Z\oplus f_n)$$ is skeletal.
\end{proof}

\begin{lemma}\label{pm-l1} For any surjective map $f:A\to B$ between finite discrete spaces and any compact space $X$ the map $F(\id_X\oplus f):F(X\oplus A)\to F(X\oplus B)$ is skeletal.
\end{lemma}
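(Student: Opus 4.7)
The plan is to reduce Lemma~\ref{pm-l1} to the zero-dimensional case already settled in Lemma~\ref{pm-l0}. I would use the standard fact that every compactum $X$ admits an irreducible continuous surjection $\pi\colon Z\to X$ from a zero-dimensional compactum $Z$: for instance, the canonical projection $EX\to X$ from the absolute, whose domain is extremally disconnected and hence zero-dimensional; or, equivalently, a Zorn-minimal closed subspace of $\beta X_d$ mapping onto $X$.

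Given such a $\pi$, a direct check shows that a disjoint sum of irreducible surjections between compacta is again irreducible: any proper closed subset $C\subsetneq Z\oplus A$ either misses a point of $A$, or has $C\cap Z\subsetneq Z$, in which case $\pi(C\cap Z)\subsetneq X$ by the irreducibility of $\pi$. Thus $\pi\oplus\id_A\colon Z\oplus A\to X\oplus A$ and $\pi\oplus\id_B\colon Z\oplus B\to X\oplus B$ are both irreducible, and Corollary~\ref{fo-l3} makes the maps $F(\pi\oplus\id_A)$ and $F(\pi\oplus\id_B)$ skeletal. Being $F$-images of surjections, they are also surjective by the epimorphicity of $F$. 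Since $Z$ is zero-dimensional, Lemma~\ref{pm-l0} supplies the skeletality of the top horizontal map $F(\id_Z\oplus f)\colon F(Z\oplus A)\to F(Z\oplus B)$.

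Functoriality then yields the commutative identity
\[
F(\pi\oplus\id_B)\circ F(\id_Z\oplus f)\;=\;F(\id_X\oplus f)\circ F(\pi\oplus\id_A),
\]
whose left-hand side is the composition of two skeletal maps and is therefore skeletal. The final step is the elementary observation that if $g\circ h$ is skeletal and $h$ is a continuous surjection, then $g$ is skeletal: were some closed nowhere dense $N$ to have $g^{-1}(N)$ containing a non-empty open $U$, then $h^{-1}(U)$ would be a non-empty open subset of $(g\circ h)^{-1}(N)$, contradicting its nowhere density. Applying this with $g=F(\id_X\oplus f)$ and $h=F(\pi\oplus\id_A)$ completes the argument.

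I do not anticipate any serious obstacle; the one point to handle with care is the choice of $\pi$ as \emph{irreducible}, rather than merely surjective, so that Corollary~\ref{fo-l3} may be invoked to transfer skeletality through $F$ to the vertical maps of the square.
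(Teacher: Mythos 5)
Your proposal is correct and follows essentially the same route as the paper: resolve $X$ by an irreducible surjection $\xi\colon Z\to X$ from a zero-dimensional compactum, apply $F$ to the square formed by $\id_Z\oplus f$, $\id_X\oplus f$ and the maps $\xi\oplus\id_A$, $\xi\oplus\id_B$, and combine Lemma~\ref{pm-l0} with Corollary~\ref{fo-l3} and the surjectivity of $F(\xi\oplus\id_A)$. The only difference is that you spell out the final diagram-chase (that $g$ is skeletal when $g\circ h$ is skeletal and $h$ is a continuous surjection), which the paper leaves implicit; this is a welcome clarification but not a different argument.
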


\begin{proof}  By \cite[3.2.2, 3.1.C]{En}, the compact space $X$ is the image of a compact zero-dimensional space $Z$ under an irreducible map $\xi:Z\to X$. Applying to the commutative diagram
$$\xymatrix{
X\oplus A\ar[rr]^{\id_X\oplus f}&&X\oplus B\\
Z\oplus A\ar[rr]_{\id_Z\oplus f}\ar[u]^{\xi\oplus\id_A}&&Z\oplus B\ar[u]_{\xi\oplus\id_B}
}$$the functor $F$, we obtain the commutative diagram
$$\xymatrix{
F(X\oplus A)\ar[rr]^{F(\id_X\oplus f)}&&F(X\oplus B)\\
F(Z\oplus A)\ar[rr]_{F(\id_Z\oplus f)}\ar[u]^{F(\xi\oplus\id_A)}&&F(Z\oplus B)\ar[u]_{F(\xi\oplus\id_B)}
}$$in which the map $F(\xi\oplus \id_A)$ is surjective, $F(\id_Z\oplus f)$ is skeletal by Lemma~\ref{pm-l0} and $F(\xi\oplus\id_B)$ is skeletal by Corollary~\ref{fo-l3}. Because of that the map $F(\id_X\oplus f)$ is skeletal.
\end{proof}

The following lemma yields the ``1-mec'' part of Theorem~\ref{s-map}.

\begin{lemma}\label{pm-l2} For any skeletal surjection $f:X\to Y$ between compacta the map $Ff:FX\to FY$ is skeletal.
\end{lemma}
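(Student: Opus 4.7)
The plan is to factor the given skeletal map $f$ (after restricting to an appropriate closed subspace of $X$) as a composition $q\circ f_2$, where $f_2$ is handled by Theorem~\ref{t5n} and $q$ is handled directly by Lemma~\ref{pm-l1}.

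Given $a\in FX$ and an open neighborhood $U$ of $a$, my aim is to show $\cl(Ff(U))$ has non-empty interior in $FY$. Using Lemma~\ref{f-l2} applied with the dense subset $A=X$, I first replace $a$ by an element $a_0\in U$ whose finite support $S_0=\supp(a_0)$ satisfies $N_{f|S_0}\subset\dot Y$.

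Let $T=N_{f|S_0}$, a finite set of isolated points of $Y$; hence $T$ is clopen in $Y$, yielding clopen decompositions $Y=Y_0\oplus T$ and $X=X_0\oplus C$ with $Y_0=Y\setminus T$, $C=f^{-1}(T)$ and $X_0=f^{-1}(Y_0)$. Set $S_0'=S_0\cap C$, a finite discrete subset of $C$, and introduce the closed subspace $X_1=X_0\cup S_0'\subset X$ (homeomorphic to $X_0\oplus S_0'$) together with the auxiliary compactum $Y_1=Y_0\oplus S_0'$. Since $S_0\subset X_1$, monomorphicity of $F$ together with Lemma~\ref{l:BMZ} places $a_0$ in $F(X_1)$, identified with its image in $FX$ under the embedding $\iota\colon X_1\hookrightarrow X$. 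I then factor $f\circ\iota$ as $q\circ f_2$, where
$$f_2=(f|X_0)\oplus\id_{S_0'}\colon X_1\to Y_1\quad\text{and}\quad q=\id_{Y_0}\oplus(f|S_0')\colon Y_1\to Y.$$

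The three key checks are the following. First, $f_2$ is skeletal: a nowhere dense $A\subset Y_1$ must avoid the clopen discrete piece $S_0'$, hence lies in $Y_0$, which is clopen in $Y$; so nowhere-denseness transfers from $Y_0$ to $Y$ and, by skeletality of $f$, on to $X$ and $X_0\subset X_1$. Second, $f_2|S_0$ is injective: on $S_0\setminus C$ one has $f_2=f$, which is injective there because $N_{f|S_0}\subset T$ forces every coincidence of $f|S_0$ to lie in $C$; on $S_0'$ the map $f_2$ is the identity; and the two pieces land in disjoint clopen summands of $Y_1$. Consequently $a_0\in\mathcal A_1$ for $f_2$, and Theorem~\ref{t5n} gives that $Ff_2$ is skeletal at $a_0$. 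Third, since $f|S_0'\colon S_0'\to T$ is a surjection between finite discrete spaces, $q=\id_{Y_0}\oplus(f|S_0')$ is precisely of the form treated by Lemma~\ref{pm-l1}, so $Fq$ is skeletal.

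To conclude, skeletality of $Ff_2$ at $a_0$ applied to the open neighborhood $U\cap F(X_1)$ of $a_0$ yields a non-empty open set $V_1\subset FY_1$ with $V_1\subset\cl(Ff_2(U\cap F(X_1)))$, and skeletality of $Fq$ then forces $\cl(Fq(V_1))$ to have non-empty interior in $FY$. Because $Ff\circ F\iota=Fq\circ Ff_2$ and $F\iota$ is an embedding, continuity of $Fq$ yields $\cl(Fq(V_1))\subset\cl(Ff(U))$, so $Ff$ is skeletal at $a$. The main obstacle I foresee is purely bookkeeping: organizing the decomposition so that the two factors match the hypotheses of Theorem~\ref{t5n} and Lemma~\ref{pm-l1} exactly. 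The decisive conceptual point is simply that the finitely many multi-point fibers of $f|S_0$ sit over the isolated (hence clopen) set $T\subset Y$, which is what allows the clean splitting into an "injective on support" piece $f_2$ and a "finite discrete fiber" piece $q$.
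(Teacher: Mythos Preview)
Your proof is correct and follows essentially the same approach as the paper's. The paper constructs the identical factorization (with $\tilde X$, $\tilde Y$, $\tilde f$, $h$ in place of your $X_1$, $Y_1$, $f_2$, $q$; in particular your $S_0'$ is exactly the upper non-degeneracy set $N^{f|S_0}$ used there), invokes Theorem~\ref{t5n} for the first factor and Lemma~\ref{pm-l1} for the second, and concludes via Lemma~\ref{l4a} where you unwind the closure argument by hand.
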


\begin{proof} By Lemma~\ref{f-l2}, the set $$\mathcal A_0=\{a\in F_\w(X):N_{f|\supp(a)}\subset \dot Y\}$$ is dense in $FX$. So, the skeletality of the map $Ff$ will follow as soon as we check its skeletality at each point $a\in\mathcal A_0$. If $f|\supp(a)$ is injective, then $Ff$ is skeletal at $a$ by Theorem~\ref{t5n}.
So, we assume that $f|\supp(a)$ is not injective. In this case the support $A=\supp(a)$ is not empty and $a\in FA$ by Lemma~\ref{l:BMZ}. By our assumption, $N_{f|A}\subset\dot Y$ and hence the complement $Y\setminus N_{f|A}$ is an open-and-closed subset of $Y$. Consider the closed subspace $\tilde X=f^{-1}(Y\setminus N_{f|A})\cup N^{f|A}$ of $X$ and the topological sum $\tilde Y=(Y\setminus N_{f|A})\oplus N^{f|A}$.
Next, consider the commutative diagram
$$\xymatrix{
X\ar[r]^{f}&Y\\
\tilde X\ar[r]_{\tilde f}\ar[u]^{i}&\tilde Y\ar[u]_{h}
}$$where $i:\tilde X\to X$ is the embedding, $\tilde f$ is defined by $\tilde f|\tilde X\setminus N^{f|A}=f|\tilde X\setminus N^{f|A}$ and $\tilde f|N^{f|A}=\id$ while $h:\tilde Y\to Y$ is defined by $h|Y\setminus N_{f|A}=\id$ and $h|N^{f|A}=f|N^{f|A}$.

Applying the functor $F$ to this diagram we get the commutative diagram
$$\xymatrix{
FX\ar[r]^{Ff}&FY\\
F\tilde X\ar[r]_{F\tilde f}\ar[u]^{Fi}&F\tilde Y\ar[u]_{Fh}
}$$
Since $\tilde f$ is skeletal and the restriction $\tilde f|A$ is injective, the map $F\tilde f:F\tilde X\to F\tilde Y$ is skeletal at $a$ by Theorem~\ref{t5n}. By Lemma~\ref{pm-l1}, the map $Fh$ is skeletal. Consequently, the composition $Fh\circ F\tilde f$ is skeletal at $a$ and then $Ff$ is skeletal at $a$ by Lemma~\ref{l4a}.
\end{proof}

To prove the ``1-mecw'' part of Theorem~\ref{s-map}, assume that $F:\Comp\to\Comp$ is a 1-mecw functor such that for each zero-dimensional compact metrizable space $Z$ the map $F{\mathscr 2}_Z:F(Z\oplus 2)\to F(Z\oplus 1)$ is skeletal.
The skeletality of the functor $F$ will follow from the ``1-mec'' part of Theorem~\ref{s-map} as soon as we check that for each zero-dimensional compact space $Z$ the map $F{\mathscr 2}_Z:F(Z\oplus 2)\to F(Z\oplus 1)$ is skeletal. For this we shall apply the Characterization Theorem~\ref{skel-char-comp}.

By (the proof of) Proposition~1.3.5 of \cite{Chi}, the zero-dimensional space $Z$ is homeomorphic to the limit $\invlim \mathcal S_Z$ of an $\w$-spectrum $\mathcal S_Z=\{Z_\alpha,p_\alpha^\beta,\Sigma\}$ with surjective limit projections, consisting of zero-dimensional compact metrizable spaces $Z_\alpha$, $\alpha\in \Sigma$. For $n\in\{1,2\}$, consider the inverse spectrum  $\mathcal S_Z\oplus n=\{Z_\alpha\oplus n,p_\alpha^\beta\oplus\id_n,\Sigma\}$, where $\id_n:n\to n$ denotes the identity map of the discrete space $n=\{0,\dots,n-1\}$. Next, consider the skeletal morphism $\{\mathscr 2_{Z_\alpha}\}_{\alpha\in A}:\mathcal S_Z\oplus 2\to\mathcal S_Z\oplus 1$. Applying to this morphism the mecw functor $F$, we obtain a morphism $\{F\mathscr 2_{Z_\alpha}\}_{\alpha\in \Sigma}:F(\mathcal S_Z\oplus 2)\to F(\mathcal S_Z\oplus 1)$. By our assumption, for every $\alpha\in \Sigma$ the map $F\mathscr 2_{Z_\alpha}:F(Z_\alpha\oplus 2)\to F(Z_\alpha\oplus 1)$ is skeletal. Then Theorem~\ref{skel-char-comp} guarantees that the limit map $\invlim F\mathscr 2_{Z_\alpha}:\invlim F(\mathcal S_Z\oplus 2)\to \invlim F(\mathcal S_Z\oplus 1)$ of the skeletal morphism $\{F\mathscr 2_{Z_\alpha}\}_{\alpha\in\Sigma}$ is skeletal. By the continuity of the functor $F$, this map is homeomorphic to the map $F\mathscr 2_Z:F(Z\oplus 2)\to F(Z\oplus 1)$.

\section{Proof of Theorem~\ref{t1.5n}}

We need to prove that a 1-mecw functor $F:\Comp\to\Comp$ is skeletal if it is finitely bicommutative and finitely skeletal. By Theorem~\ref{s-map}, it suffices to check that for any zero-dimensional compact metrizable space $Z$ the map $F\mathscr 2_Z:F(Z\oplus 2)\to F(Z\oplus 1)$ is skeletal. Write the space  $Z$ as the limit of an inverse spectrum $\mathcal S_Z=\{Z_n,p_n^m,\w\}$ consisting of finite spaces $Z_n$, $n\in\w$, and surjective bonding maps $p_n^m:Z_m\to Z_n$, $n\le m$. Then the map $\mathscr 2_Z:Z\oplus 2\to Z\oplus 1$ can be identified with the limit map of the morphism $\{\mathscr 2_{Z_n}\}_{n\in\w}:\mathcal S_Z\oplus 2\to\mathcal S_Z\oplus 1$ between the inverse spectra $\mathcal S_Z\oplus 2=\{Z_n\oplus 2,p_n^m\oplus \id_2,\w\}$ and $\mathcal S_Z\oplus 1=\{Z_n\oplus 1,p_n^m\oplus \id_1,\w\}$.
Applying to this morphism the continuous functor $F$, we obtain a morphism $\{F\mathscr 2_{Z_n}\}_{n\in\w}:F(\mathcal S_Z\oplus 2)\to F(\mathcal S_Z\oplus 1)$ between the inverse spectra
$F(\mathcal S_Z\oplus 2)=\{F(Z_n\oplus 2),F(p_n^m\oplus \id_2),\w\}$ and $F(\mathcal S_Z\oplus 1)=\{F(Z_n\oplus 1),F(p_n^m\oplus \id_1),\w\}$.

The finite skeletality of the functor $F$ implies that the morphism $\{F\mathscr 2_{Z_n}\}_{n\in\w}$ consists of skeletal maps $F\mathscr 2_{Z_n}:F(Z_n\oplus 2)\to F(Z_n\oplus 1)$ for all $n\in\w$.

It is clear that for any $n\le m$ the bonding $\downarrow_n^m$-square $\mathcal D_n^m$
$$\xymatrix{
Z_m\oplus 2\ar_{p_n^m\oplus\id_2}[d]\ar^{\mathscr 2_{Z_m}}[r]&Z_m\oplus 1\ar^{p_n^m\oplus\id_1}[d]\\
Z_n\oplus 2\ar_{\mathscr 2_{Z_n}}[r]&Z_n\oplus 1}
$$is bicommutative and consists of finite spaces. Since the functor $F$ is finitely bicommutative, the bonding $\downarrow_n^m$-square  $F\mathcal D_n^m$ of the morphism $\{F\mathscr 2_{Z_n}\}_{n\in\w}$ also is bicommutative.

By Proposition 2.5 of \cite{Sh}, the bicommutativity of the bonding $\downarrow_n^m$ squares $F\mathcal D_n^m$, $n\le m$, implies the bicommutativity of the limit $\downarrow_n$-square $F\mathcal D_n$
$$\xymatrix{
F(Z\oplus 2)\ar_{F(p_n\oplus\id_2)}[d]\ar^{F(\mathscr 2_{Z})}[r]&F(Z\oplus 1)\ar^{F(p_n\oplus\id_1)}[d]\\
F(Z_n\oplus 2)\ar_{F(\mathscr 2_{Z_n})}[r]&F(Z_n\oplus 1)}
$$
for every $n\in\w$. This fact combined with the skeletality of the map $F(\mathscr 2_{Z_n})$ implies that the limit $\downarrow_n$-square $F\mathcal D_n$ is skeletal. Now Proposition 3.1 of \cite{BKM} guarantees that the limit map $F\mathscr 2_Z:F(Z\oplus 2)\to F(Z\oplus 1)$ of the morphism $\{F\mathscr 2_{Z_n}\}_{n\in\w}:F(\mathcal S_Z\oplus 2)\to F(\mathcal S_Z\oplus 1)$ is skeletal.

\section{Proof of Theorem~\ref{s-space}}\label{s:s-space}

Let $F:\Comp\to\Comp$ be a 1-mecw functor. Given a skeletally generated compact Hausdorff space $X$, we need to prove that the space $FX$ is skeletally generated.

 Represent $X$ as the limit of a continuous $\w$-spectrum $\mathcal S=\{X_\alpha,\pi^\beta_\alpha,\Sigma\}$ with surjective limit projections $p_\alpha:X\to X_\alpha$, $\alpha\in\Sigma$.
By \cite{kp8}, the space $X$, being skeletally generated, has countable cellularity. Consequently, the set $\dot X$ of isolated points of $X$ is at most countable.

For each isolated point $x\in {\dot X}$ of $X$ we can find an index $\alpha_x\in\Sigma$ such that  $\{x\}=\pi_{\alpha_x}^{-1}(U_x)$ for some open set $U_x\subset X_{\alpha_x}$, which must coincide with the singleton of $\pi_{\alpha_x}(x)$. Then for any $\alpha\ge\sup\{\alpha_x:x\in\dot X\}$ we get $\pi_\alpha(\dot X)\subset D_{\pi_\alpha}\cap \dot X_\alpha$. Replacing the index set $\Sigma$ by its cofinal subset $\{\alpha\in \Sigma:\alpha\ge \sup\{\alpha_x:x\in\dot X\}\}$, if necessary, we can assume that  $\pi_\alpha(\dot X)\subset D_{\pi_\alpha}\cap \dot X_\alpha$ for all $\alpha\in\Sigma$.

\begin{claim}\label{ps-cl} The set $\Sigma'=\{\alpha\in \Sigma:\pi_\alpha(\dot X)=\dot X_\alpha\}$ is closed and cofinal in $\Sigma$.
\end{claim}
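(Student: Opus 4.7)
The plan is to prove closedness and cofinality separately, both leveraging the $\omega$-continuity of the spectrum and the fact that each $X_\alpha$, having countable weight, has at most countably many isolated points. Since the inclusion $\pi_\alpha(\dot X)\subset\dot X_\alpha$ is already arranged for every $\alpha\in\Sigma$ by the preceding refinement, I only need to check the reverse inclusion $\dot X_\alpha\subset\pi_\alpha(\dot X)$ in both parts.

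For closedness, let $\Sigma_0\subset\Sigma'$ be a countable directed subset with supremum $\gamma$. By $\omega$-continuity, $X_\gamma\cong\invlim\{X_\alpha,\pi^\beta_\alpha,\Sigma_0\}$. For any $y\in\dot X_\gamma$, the open singleton $\{y\}$ contains some basic open of this inverse limit, which therefore equals $\{y\}$; surjectivity of the projections $\pi^\gamma_\alpha$ then forces this basic open to be of the form $(\pi^\gamma_\alpha)^{-1}(\{\pi^\gamma_\alpha(y)\})$ with $\pi^\gamma_\alpha(y)\in\dot X_\alpha$. Since $\alpha\in\Sigma'$, some $x\in\dot X$ satisfies $\pi_\alpha(x)=\pi^\gamma_\alpha(y)$, and then $\pi_\gamma(x)\in(\pi^\gamma_\alpha)^{-1}(\pi^\gamma_\alpha(y))=\{y\}$ yields $y\in\pi_\gamma(\dot X)$.

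For cofinality, given $\alpha_0\in\Sigma$, I build by induction an increasing sequence $(\alpha_m)_{m\in\omega}$ in $\Sigma$. At stage $m$, the countable set $E_m:=\dot X_{\alpha_m}\setminus\pi_{\alpha_m}(\dot X)$ of spurious isolated points is handled as follows: for each $y\in E_m$ the clopen fiber $\pi_{\alpha_m}^{-1}(y)\subset X$ is disjoint from $\dot X$, hence has no isolated points, and so contains two distinct points $x_1^y,x_2^y$; since the limit projections separate points of $X$, some $\beta^y\ge\alpha_m$ satisfies $\pi_{\beta^y}(x_1^y)\ne\pi_{\beta^y}(x_2^y)$. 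By $\omega$-completeness of $\Sigma$ the supremum $\alpha_{m+1}:=\sup\{\beta^y:y\in E_m\}$ lies in $\Sigma$, and I set $\alpha:=\sup_m\alpha_m\in\Sigma$.

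To verify $\alpha\in\Sigma'$, fix $y'\in\dot X_\alpha$ and apply the same $\omega$-continuity argument used in the closedness proof to obtain $m$ with $y'_m:=\pi^\alpha_{\alpha_m}(y')\in\dot X_{\alpha_m}$ and $(\pi^\alpha_{\alpha_m})^{-1}(y'_m)=\{y'\}$. If $y'_m\in\pi_{\alpha_m}(\dot X)$ the conclusion $y'\in\pi_\alpha(\dot X)$ follows exactly as in the closedness case. Otherwise $y'_m\in E_m$, and the singleton-fiber condition combined with the surjectivity of $\pi^\alpha_{\alpha_{m+1}}$ forces $(\pi^{\alpha_{m+1}}_{\alpha_m})^{-1}(y'_m)$ to be the singleton $\{\pi^\alpha_{\alpha_{m+1}}(y')\}$, contradicting that by construction this fiber contains the two distinct points $\pi_{\alpha_{m+1}}(x_1^{y'_m})$ and $\pi_{\alpha_{m+1}}(x_2^{y'_m})$. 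The main obstacle is the bookkeeping in the cofinality construction --- arranging at each stage a single index $\alpha_{m+1}$ that simultaneously separates the countably many pairs $\{x_1^y,x_2^y\}_{y\in E_m}$ --- and this is precisely where the $\omega$-completeness of $\Sigma$ is essential.
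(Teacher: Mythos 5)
Your proof is correct and follows essentially the same route as the paper's: closedness via the ($\omega$-)continuity of the spectrum and the basic-open-set description of isolated points in an inverse limit, and cofinality by iterating the ``kill all spurious isolated points of $X_{\alpha_m}$ at stage $m+1$'' step, using that each $X_{\alpha_m}$ has only countably many isolated points together with $\omega$-completeness of $\Sigma$, and then deriving the same singleton-fiber contradiction at the supremum. The only cosmetic difference is that you verify closedness only for countable directed subsets (which is all that is needed for $\mathcal S|\Sigma'$ to remain an $\w$-spectrum), while the paper runs the identical argument for arbitrary chains using the full continuity of $\mathcal S$.
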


\begin{proof} First we prove that $\Sigma'$ is closed in $\Sigma$. Given a chain $C\subset\Sigma'$ having the supremum $\sup C$ in $\Sigma$, we need to show that $\sup C\in\Sigma'$. If $\sup C\in C$, then there is nothing to prove. So we assume that $\gamma=\sup C\notin C$. In this case by the continuity of the spectrum $\mathcal S$, the space $X_\gamma$ is the limit of the inverse subspectrum $\mathcal S|C=\{X_\alpha,\pi^\alpha_\beta,C\}$.

We need to prove that $\dot X_\gamma\subset\pi_\gamma(\dot X)$. Take any  isolated point $x\in \dot X_\gamma$. By the definition of the topology  of the inverse limit $X_\gamma=\invlim\mathcal S|C$, there is an index $\alpha\in C$ such that $\{x\}=(\pi^\gamma_\alpha)^{-1}(y)$ for some isolated point $y\in X_\alpha$.
Since $y\in \dot X_\alpha=\pi_\alpha(\dot X)$, there is a point $z\in \dot X$ with  $y=\pi_\alpha(z)$. Now consider the point $x'=\pi_\gamma(z)\in X_\gamma$ and observe that $\pi^\gamma_\alpha(x')=\pi^\gamma_\alpha(\pi_\gamma(z))=\pi_\gamma(z)=y=\pi^\gamma_\alpha(x)$ and $y\in \dot X_\alpha=\pi_\alpha(\dot X)\subset D_{\pi_\alpha}\subset D_{\pi^\gamma_\alpha},$ which implies $x=x'\in \pi_\gamma(\dot X)$.

Next, we prove that $\Sigma'$ is cofinal in $\Sigma$. Given any $\alpha_0\in\Sigma$ we need to find $\alpha\in\Sigma'$ with $\alpha\ge\alpha_0$. For any isolated point $x\in \dot X_{\alpha_0}\setminus \pi_{\alpha_0}(\dot X)$ the preimage $\pi^{-1}_{\alpha_0}(x)$ is an open subset of $X$ containing no isolated points of $X$. Since the metrizable compactum $X_{\alpha_0}$ has at most countably many isolated points, and the index set $\Sigma$ is $\w$-complete, there is an index $\alpha_1\ge \alpha_0$ such that for each isolated point $x\in \dot X_{\alpha_0}\setminus\pi_{\alpha_0}(\dot X)$ the preimage $(\pi^{\alpha_1}_{\alpha_0})^{-1}(x)$ is not a singleton, which means that $\dot X_{\alpha_0}\setminus\pi_{\alpha_0}(\dot X)\subset N_{\pi^{\alpha_1}_{\alpha_0}}$. Proceeding by induction, we can construct an increasing chain $(\alpha_n)_{n\in\w}$ in $\Sigma$ such that $\dot X_{\alpha_n}\setminus \pi_{\alpha_n}(\dot X)\subset N_{\pi^{\alpha_{n+1}}_{\alpha_n}}$ for all $n\in\w$. Since $\Sigma$ is $\w$-complete, the chain $(\alpha_n)_{n\in\w}$ has the smallest upper bound $\alpha_\w=\sup_{n\in\w}\alpha_n$ in $\Sigma$.
We claim that $\alpha_\w\in\Sigma'$. Given any isolated point $x\in \dot X_{\alpha_\w}$, we need to prove that $x\in \pi_{\alpha_\w}(\dot X)$.

The continuity of the spectrum $\mathcal S$ guarantees that the space $X_{\alpha_\w}$ is the limits of the inverse sequence $\{X_{\alpha_n},\pi^{\alpha_{n+1}}_{\alpha_n},\w\}$. By the definition of the topology of the inverse limit, there is a number $n\in\w$ such that $\{x\}=(\pi^{\alpha_\w}_{\alpha_n})^{-1}(U_n)$ for some open set $U_n\subset X_{\alpha_n}$ which must coincide with the singleton of the isolated point  $y=\pi^{\alpha_\w}_{\alpha_n}(x)$. We claim that $y\in\pi_{\alpha_n}(\dot X)$. In the other case the choice of the index $\alpha_{n+1}$ guarantees that the preimage $(\pi^{\alpha_{n+1}}_{\alpha_n})^{-1}(y)$ is not a singleton and then $$(\pi^{\alpha_\w}_{\alpha_{n+1}})^{-1}((\pi^{\alpha_{n+1}}_{\alpha_n})^{-1}(y))=(\pi^{\alpha_\w}_{\alpha_n})^{-1}(y)=\{x\}$$is not a singleton, which is a contradiction.
Thus $y=\pi_{\alpha_n}(z)$ for some isolated point $z\in \dot X$. Taking into account that $y\in\pi_{\alpha_n}(\dot X)\subset D_{\pi_{\alpha_n}}\subset D_{\pi^{\alpha_\w}_{\alpha_n}}$ and $\pi^{\alpha_\w}_{\alpha_n}(x)=y=\pi^{\alpha_\w}_{\alpha_n}(\pi_{\alpha_\w}(z))$, we can show that $x=\pi_{\alpha_\w}(z)\in\pi_{\alpha_\w}(\dot X)$.
\end{proof}

It follows from Claim~\ref{ps-cl} that $X$ is the limit of the $\w$-spectrum $\mathcal S=\{X_\alpha,\pi^\alpha_\beta,\Sigma'\}$ consisting of metrizable compacta and surjective skeletal bonding projections and such that $\dot X_{\alpha}\subset D_{\pi_\alpha}\subset D_{\pi^\beta_\alpha}$ for all $\alpha\le\beta$ in $\Sigma'$. By Theorem~\ref{t5n}, the latter condition guarantees that the map $F\pi^\beta_\alpha:FX_\beta\to FX_\alpha$ is skeletal. Since the functor $F$ is epimorphic, continuous and preserves weight, the space  $FX$ is skeletally generated, being the limit of the continuous $\w$-spectrum $\{FX_\alpha,F\pi^\beta_\alpha,\Sigma'\}$ with surjective skeletal bonding projections $F\pi^\beta_\alpha:FX_\beta\to FX_\alpha$.

\section{Proof of Theorem~\ref{ps-space}}\label{s:ps-space}

Assume that $F:\Comp\to\Comp$ is a preimage preserving mecw-functor with $F1\ne F2$. We need to prove that a compact Hausdorff space $X$ is skeletally generated if and only if so is the space $FX$.

If $X$ is skeletally generated, then by Theorem~\ref{s-space}, so is the space $FX$.
Now assume conversely that the space $FX$ is skeletally generated.

Write the space $X$ as the limit of an inverse $\w$-spectrum $\mathcal S=\{X_\alpha,p_\alpha^\beta,\Sigma\}$ with surjective bonding projections.
Applying to this spectrum the functor $F$, we get the inverse $\w$-spectrum $F\mathcal S=\{FX_\alpha,Fp_\alpha^\beta,\Sigma\}$. By the continuity of the functor $F$, the limit space of the spectrum $F\mathcal S$ can be identified with $FX$. The space $FX$, being skeletally generated, is the limit of an inverse $\w$-spectrum with skeletal bonding projection. By the Spectral Theorem of \v S\v cepin \cite{Sh}, \cite[1.3.4]{Chi}, we can assume that the latter spectrum coincides with the subspectrum $\mathcal S|\Sigma'=\{FX_\alpha,Fp^\beta_\alpha,\Sigma'\}$ for some $\w$-closed cofinal subset $\Sigma'$ of the index set $\Sigma$. According to Theorem~\ref{preimage}, the skeletality of the maps $Fp_\alpha^\beta$  implies the skeletality of the maps $p^\beta_\alpha$ for any $\alpha\le \beta$ in $\Sigma'$. Consequently, the space $X$ is skeletally generated, being homeomorphic to the limit space of the inverse spectrum $\mathcal S|\Sigma'=\{X_\alpha,p_\alpha^\beta,\Sigma'\}$ with surjective skeletal bonding projections.

\section{Some Examples and Open Problems}\label{s:eop}

In this section we shall present examples of skeletal and non-skeletal functors.

For a natural number $n$ and a mec functor $F:\Comp\to\Comp$ let $F_n$ be the subfunctor of $F$ assigning to each compact space $X$ the closed subspace
$$F_n(X)=\{a\in FX:\exists \xi\in C(n,X)\mbox{ such that }a\in F\xi(Fn)\}$$of $F$.
First we observe that subfuctors $F_n$ of open functors need not be skeletal.

\begin{example}\label{ex:finitary} For the open functor $P:\Comp\to\Comp$ of probability measures and every natural number $n\ge 2$ the subfunctor $P_n$ is not skeletal.
\end{example}

This can be shown applying Theorem~\ref{s-map}. The non-skeletal functors $P_n$ are not finitary.
We recall that a functor $F:\Comp\to\Comp$ is {\em finitary} if for any finite discrete space $X$ the space $FX$ is finite. A typical example of a finitary functor is the functor $\exp$ of hyperspace, see \cite[2.1.1]{TZ}. This functor is open according to \cite[2.10.11]{TZ}.

\begin{example} For every $n\ge 3$ the subfunctor $\exp_n$ of the hyperspace functor is normal and finitary but not skeletal.
\end{example}

By Corollary~\ref{op->skel}, each open 1-mec functor is skeletal. Now we present three examples showing that the reverse implication does not hold.

By Proposition 2.10.1 \cite{TZ}, a normal functor $F:\Comp\to\Comp$ with finite supports is bicommutative if and only if $F$ is finitely bicommutative.
In \cite[p.85]{TZ} A.Teleiko and M.Zarichnyi constructed an example of a finitary normal functor $F:\Comp\to\Comp$, which is finitely bicommutative but not bicommutative. Applying to this functor Theorems~\ref{t1.5n} and \ref{scepin}, we get:

\begin{example}\label{ex:TZ} There is a finitary normal functor $F:\Comp\to\Comp$ which is  finitely bicommutative and skeletal but is not bicommutative and hence not open.
\end{example}

By Proposition 2.10.1 of \cite{TZ}, the functor from Example~\ref{ex:TZ} has infinite degree. There is also a finitary {\em weakly normal} functor of finite degree, which is skeletal but not open.

By $\lambda:\Comp\to\Comp$ we denote the functor of superextension, see \cite[2.1.2]{TZ}. It is known that the functor $\lambda$ is open, finitary, weakly normal, preserves 1-preimages but fails to preserve preimages, see \cite{KR} and Propositions 2.3.2, 2.10.13 of \cite{TZ}. By \cite[2.10.19]{TZ}, for every $n\ge 3$ the subfunctor $\lambda_3$ of $\lambda$ is not open. Using the characterization Theorem~\ref{s-map}, one can easily check that the functor $\lambda_3$ is skeletal. Thus we obtain another:

\begin{example}\label{e:lambda3} The finitary weakly normal functor $\lambda_3$ is skeletal but not open.
\end{example}

The functor $\lambda_3$ is finitary and has finite degree but is not normal. Our final example is a normal functor of finite degree which is skeletal but not open.

\begin{example}\label{e:PDelta} The functor $P_3$ contains a normal subfunctor $P_\Delta$, which is skeletal but not open.
\end{example}

\begin{proof}
In the standard 2-simplex $\Delta_2=\{(\alpha,\beta,\gamma)\in[0,1]^3:\alpha+\beta+\gamma=1\}$ consider the closed subsets
$$\begin{aligned}
\Delta_0&=\big\{(\alpha,\beta,\gamma)\in\Delta^2:\max\{\alpha,\beta,\gamma\}=1\big\},\\
\Delta_1&=\big\{(\alpha,\beta,\gamma)\in\Delta^2:\min\{\alpha,\beta,\gamma\}=0,\;\; \max\{\alpha,\beta,\gamma\}\le\frac{11}{12}\big\},\\
\Delta_2&=\big\{(\alpha,\beta,\gamma)\in\Delta^2:\min\{\alpha,\beta,\gamma\}\ge\frac1{12},\;\; \max\{\alpha,\beta,\gamma\}\ge \frac34\big\},
\end{aligned}$$
and their union $\Delta=\Delta_0\cup\Delta_1\cup\Delta_2$, which looks as follows:

\begin{picture}(150,140)(-170,-10)
\put(0,0){\circle*{3}}
\put(5,10){\line(1,2){50}}
\put(60,119){\circle*{3}}
\put(120,0){\circle*{3}}
\put(115,10){\line(-1,2){50}}
\put(10,0){\line(1,0){100}}
\put(14,10){{\huge $\blacktriangle$}}
\put(92,10){{\huge $\blacktriangle$}}
\put(53,90){{\huge $\blacktriangle$}}
\end{picture}

Now consider the subfunctor $P_\Delta\subset P$ of the functor of probability measures assigning to each compact space $X$ the closed subspace
$$P_\Delta(X)=\{\alpha\delta_x+\beta\delta_y+\gamma\delta_z:(\alpha,\beta,\gamma)\in\Delta,\;x,y,z\in X\}\subset P(X).$$
Here $\delta_x$ stands for the Dirac measures concentrated at a point $x$. One can check that $P_\Delta$ is a normal functor of degree $\deg P_\Delta=3$. In fact, $P_\Delta$ is a subfunctor of the functor $P_3\subset P$. Theorem 2.10.21 \cite{TZ} characterizing  open normal functors of finite degree implies that the functor $P_\Delta$ is not open. Applying the characterization Theorem~\ref{s-map}, one can check that the functor $P_\Delta$ is skeletal.
\end{proof}

Examples~\ref{ex:finitary}---\ref{e:PDelta} suggest the following open

\begin{problem} Assume that a finitary normal functor $F:\Comp\to\Comp$ of finite degree is skeletal. Is $F$ open? Equivalently, is $F$ finitely bicommutative?
\end{problem}

Let us also ask some other questions about skeletality of functors.

We shall say that a functor $F:\Comp\to\Comp$ is ({\em finitely}) {\em square-skeletal} if for each skeletal square $\mathcal D$ consisting of continuous surjective maps between (finite) compact spaces the square $F\mathcal D$ is skeletal.

\begin{proposition}\label{p:s2} Let $F:\Comp\to\Comp$ be an epimorphic functor.
\begin{enumerate}
\item If $F$ is (finitely) square-skeletal, then $F$ is (finitely) skeletal.
\item If $F$ is (finitely) bicommutative and (finitely) skeletal, then $F$ is (finitely) square-skeletal.
\item If $F$ is finitary, then $F$ is finitely square-skeletal if and only if $F$ is finitely bicommutative only if $F$ is skeletal.
\end{enumerate}
\end{proposition}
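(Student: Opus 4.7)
The plan for part~(1) is a direct application of Remark~\ref{rem1}. Given an open surjection $f:X\to Y$ between (finite) compacta, I would form the ``identity square'' $\mathcal D$ with horizontal maps $f$ and vertical maps $\id_X,\id_Y$. Since $f$ is open and therefore skeletal, Remark~\ref{rem1} says $\mathcal D$ is skeletal, and $\mathcal D$ obviously consists of surjective maps between (finite) compacta. The (finitely) square-skeletal hypothesis on $F$ then gives that $F\mathcal D$ is skeletal; since its vertical maps are $F(\id_X)=\id_{FX}$ and $F(\id_Y)=\id_{FY}$, Remark~\ref{rem1} applied in the reverse direction shows that $Ff$ is skeletal, proving $F$ is (finitely) skeletal.

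For part~(2), the strategy is to deduce skeletality of $F\mathcal D$ from the sufficient criterion in Remark~\ref{rem1a}(2): a bicommutative square whose horizontal map is skeletal is automatically skeletal. Given a skeletal square $\mathcal D$ of surjective maps, I would first show that $\mathcal D$ is in fact bicommutative --- exploiting the surjectivity of $\tilde f$ and a net/compactness argument to upgrade the existence clause ``for every neighborhood $U$ of $x$ there is some nonempty open $V\subset f(U)$ with $p_Y^{-1}(V)\subset \tilde f(p_X^{-1}(U))$'' to the pointwise equality $\tilde f(p_X^{-1}(x))=p_Y^{-1}(f(x))$. Once $\mathcal D$ is bicommutative, the hypothesis that $F$ is (finitely) bicommutative gives bicommutativity of $F\mathcal D$. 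Meanwhile $f$ is skeletal by Remark~\ref{rem1a}(1), and in the finite setting every map between finite compacta is trivially open, so the (finite) skeletality of $F$ yields skeletality of $Ff$. Combining via Remark~\ref{rem1a}(2) closes the argument. The hardest part is the upgrade from ``skeletal surjective square'' to ``bicommutative square''; this is straightforward in the finite case (taking $U=\{x\}$) but in general will require a delicate compactness argument, and I expect this to be the crux of the proof.

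For part~(3), the plan leverages the finitary hypothesis. The key observation is that in finite discrete spaces a square is skeletal at $x$ if and only if it is bicommutative at $x$: taking $U=\{x\}$ in the skeletal definition forces $V=\{f(x)\}$, and the inclusion $p_Y^{-1}(f(x))\subset \tilde f(p_X^{-1}(x))$ is precisely bicommutativity at $x$, the reverse inclusion being automatic from commutativity. Applying this equivalence to both $\mathcal D$ and $F\mathcal D$ (the latter lying between finite compacta by finitariness) yields the ``if and only if'' between finite square-skeletality and finite bicommutativity of $F$. For the remaining implication that $F$ finitely bicommutative forces $F$ to be skeletal, I would mirror the argument of Theorem~\ref{t1.5n}: write an open surjection as the limit of open surjections between finite compacta along an $\omega$-spectrum, apply $F$ to obtain an image spectrum whose levelwise maps are trivially skeletal and whose bonding squares are bicommutative by the finite bicommutativity of $F$, then conclude via Proposition~3.1 of \cite{BKM} that the limit map $Ff$ is skeletal. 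The main obstacle here is the continuity input required to pass through the inverse limit, which must be extracted from the epimorphic and finitary hypotheses on $F$ or supplemented by additional structure.
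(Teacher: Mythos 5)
Your part (1), and all of the finite cases, coincide with the paper's (one-line) proof, which simply cites Remarks~\ref{rem1} and \ref{rem1a} and, for part (3), Theorem~\ref{t1.5n}; in particular your observation that a commutative square of surjections of finite spaces is skeletal at $x$ iff it is bicommutative at $x$ (take $U=\{x\}$) is exactly the paper's stated ingredient for part (3). The genuine problem lies in the step you yourself single out as the crux of part (2) in the infinite case: a skeletal square of surjections between compacta need \emph{not} be bicommutative, so no net/compactness argument can deliver the upgrade you plan. A counterexample is built from the very map the paper mentions after Theorem~\ref{l1c}: take $\tilde X=\tilde Y=X=A$ (the two arrows space), $Y=[0,1]$, $\tilde f=p_X=\id_A$, $f=p_Y=\pr$. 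Every nonempty open $U\subset A$ contains a saturated set $\pr^{-1}(I)$ for some nonempty open interval $I\subset \pr(U)$, so this square is skeletal at every point of $X$; yet $\tilde f(p_X^{-1}(x))=\{x\}$ while $p_Y^{-1}(f(x))=\pr^{-1}(\pr(x))$ is a doubleton for all but at most two points $x$, so the square is bicommutative at none of those points. The heuristic reason your upgrade must fail is that the witnessing open set $V$ in the definition of a skeletal square need not contain $f(x)$, so shrinking $U$ to $x$ gives no information about the fibre over $f(x)$; only where the square is \emph{open} at $x$ does Remark~\ref{rem1a}(3) give bicommutativity, and by Lemma~\ref{l1b} that is guaranteed merely on a dense set.

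There is a second, smaller gap in the infinite case of (2): even granting bicommutativity of $F\mathcal D$, Remark~\ref{rem1a}(2) requires $Ff$ to be skeletal, and the hypothesis that $F$ is skeletal only yields this when $f$ is an \emph{open} surjection, whereas a skeletal square only forces $f$ to be skeletal; you address this point solely in the finite setting, where every surjection of finite discrete spaces is open. In fairness, the paper's own proof of (2) is the single sentence ``follows from Remark~\ref{rem1a}'' and the non-finite reading is exposed to the same two objections, so your finite-case arguments are the part that can actually be carried out as written. Your part (3) is essentially the paper's, and you are right to flag the missing continuity input for the passage to the inverse limit: the paper silently imports it by citing Theorem~\ref{t1.5n}, whose hypotheses (1-mecw) go beyond ``epimorphic and finitary.''
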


\begin{proof} 1,2. The first two statements follow from Remarks~\ref{rem1} and \ref{rem1a}, respectively.
\smallskip

3. The third statement follows from Theorem~\ref{t1.5n} and an observation that a commutative square consisting of epimorphisms between finite spaces is skeletal if and only if it is bicommutative.
\end{proof}

Proposition~\ref{p:s2} suggests another two problems:

\begin{problem}\label{prob:2} Is each (finitary) skeletal normal functor $F:\Comp\to\Comp$ (finitely) square-skeletal?
\end{problem}

\begin{problem} Is a normal functor $F:\Comp\to\Comp$ skeletal if it is finitely square-skeletal? {\rm (Theorem~\ref{t1.5n} implies that the answer is affirmative if the functor $F$ is finitary).}
\end{problem}

It is clear that each functor that preserves (1-)preimages preserves finite (1-)preimages. We do not know if the converse statement is true.

\begin{problem} Does a mec-functor $F:\Comp\to\Comp$ preserve (1-)preimages if $F$ preserves finite (1-)preimages.
\end{problem}

\section{Acknowledgements}

The authors would like to thank Vesko Valov for valuable comments concerning skeletally generated compacta.


\begin{thebibliography}{}

\bibitem{BKM} T.~Banakh, A.~Kucharski, M.~Martynenko, {\em A spectral characterization of skeletal maps}, preprint (arXiv:1108.4195)

\bibitem{BMZ} T.~Banakh, M.~Martynenko, M.~Zarichnyi, {\em On monomorphic topological functors with finite supports}, preprint \newline (arXiv:1004.0457).

\bibitem{Chi} A.~Chigogidze, {\em Inverse Spectra}, North-Holland Publ. Co., Amsterdam, 1996.

\bibitem{dkz} P.~Daniels, K.~Kunen and H.~Zhou, \textit{On the open-open game}, Fund. Math. {\bf 145}:3 (1994), 205--220.

\bibitem{En} R.~Engelking, {\em General Topology}, Heldermann Verlag, Berlin, 1989.

\bibitem{FC} V.V.~Fedorchuk, A.~Chigogidze, {\em Absolute Retracts and Infinite-Dimensional Manifolds,}  ``Nauka'', Moscow, 1992 (in Russian).

\bibitem{KR} L.~Karchevska, T.~Radul, \textit{On extensions of functors}, preprint (arXiv:1106.0404).


\bibitem{kp7} A.~Kucharski, Sz.~Plewik,  \textit{Game approach to universally Kuratowski-Ulam spaces}, Topology Appl. {\bf 154}:2 (2007), 421--427.

\bibitem{kp8}  A.~Kucharski, Sz.~Plewik,  \textit{Inverse systems and $I$-favorable spaces},  Topology Appl. {\bf 156}:1 (2008), 110--116.

\bibitem{Kur1} K.~Kuratowski, {\em Topology, I}, Academic Press, NY-London; PWN, Warsaw, 1966.

\bibitem{mr} J.~Mioduszewski, L.~Rudolf, \textit{H-closed and extremally disconnected Hausdorff spaces}, Dissert. Math. {\bf 66} (1969), 55pp.

\bibitem{s76} L.~Shapiro, \textit{The space of closed subsets of $D^{\omega_2}$ is not dyadic bicompactum}, (Russian) Doklady Akad. Nauk SSSR. {\bf 228}:6 (1976), 1302--1305.

\bibitem{Sh} E.V.~\v S\v cepin, {\em Functors and uncountable powers of compacta},   Uspekhi Mat. Nauk {\bf 36} (1981), 3--62 (in Russian).

\bibitem{TZ} A.~Teleiko, M.~Zarichnyi, {\em Categorical Topology of Compact Hausdorff spaces}, VNTL, Lviv, 1999.

\bibitem{Val} V.~Valov,  {\em External characterization of I-favorable space}, Mathematica Balkanica, {\bf 25}:1-2 (2011), 61--78.

\end{thebibliography}
\end{document}